\RequirePackage{iftex}
\ifluatex
\pdfvariable suppressoptionalinfo \numexpr1+2+4+8+32+64+512\relax
\fi
\ifxetex
    \makeatletter
    \newcommand{\overlaynot}[2]{\sbox\z@{$\m@th#1\notaccent{}$}\sbox\tw@{$\m@th#1#2$}\dimen@=\dimexpr(\ht\tw@-\ht\z@)/2\relax
        \vphantom{\raisebox{\dimen@}{\copy\z@}}\ooalign{\hidewidth\raisebox{\dimen@}{\box\z@}\hidewidth\cr\box\tw@}
    }
    \makeatother
\fi

\RequirePackage[l2tabu, orthodox]{nag}

\documentclass[a4paper,11pt]{article}
\usepackage{etoolbox}
\newtoggle{book}

\usepackage{geometry}

\usepackage[hidelinks, breaklinks, pdfnewwindow=true, unicode=true]{hyperref}

\usepackage{mleftright}
\usepackage{mathtools}
\usepackage{nccmath}
\mathtoolsset{mathic}
\usepackage{amsthm}
\iftutex
    \usepackage{fontspec}
    \usepackage{unicode-math}
\else
    \usepackage{amssymb}
    \let\symbb\mathbb
    \let\symcal\mathcal
    
    \let\symbf\mathbf
    \let\symfrak\mathfrak
    \let\mdwhtsquare\Box
    \let\mdwhtdiamond\Diamond
    \DeclareSymbolFont{stmry}{U}{stmry}{m}{n}
    \DeclareMathDelimiter\lBrack{\mathopen}{stmry}{"4A}{stmry}{"71}
    \DeclareMathDelimiter\rBrack{\mathclose}{stmry}{"4B}{stmry}{"79}
\fi
\usepackage{microtype}

\usepackage[british]{babel}
\usepackage[autostyle]{csquotes}
\usepackage{authblk} \usepackage{orcidlink} \usepackage{semantex}
\usepackage{enumitem}
\usepackage{tasks}
\DeclareInstance {tasks} {thmenumerate} {default} { label = (\roman*), label-width = 2em }

\usepackage{booktabs}
\usepackage{tabularray}
\UseTblrLibrary{amsmath}
\UseTblrLibrary{booktabs}

\usepackage[sortcites,giveninits,isbn=false,sorting=nyt,date=year]{biblatex}
\renewbibmacro*{in:}{\ifentrytype{article}{}{\bibstring{in}\printunit{\intitlepunct}}}

\usepackage{fnpct}

\usepackage{zref}
\usepackage{zref-clever}

\usepackage{tikz}
\usetikzlibrary{shapes.geometric}
\usetikzlibrary{arrows}
\usetikzlibrary{fit}

\tikzset{global scale/.style={
    scale=#1,
    every node/.style={scale=#1}
  }
}
\makeatletter
\pgfdeclareshape{heart}{
  \inheritsavedanchors[from=circle]
  \inheritanchorborder[from=circle]
  \inheritanchor[from=circle]{center}

  \backgroundpath{
    \pgfpathmoveto{\pgfpoint{0pt}{0.3em}}
    \pgfpathcurveto
      {\pgfpoint{-1.1em}{1.4em}}
      {\pgfpoint{-1.8em}{-0.1em}}
      {\pgfpoint{0pt}{-1.1em}}
    \pgfpathcurveto
      {\pgfpoint{1.8em}{-0.1em}}
      {\pgfpoint{1.1em}{1.4em}}
      {\pgfpoint{0pt}{0.3em}}
    \pgfpathclose
  }
}
\makeatother

\NewDocumentCommand{\zsubref}{m o m}{\IfValueTF{#2}{\zcref{#1}[#2]}{\zcref{#1}}~\zref{#3}}
\NewDocumentCommand{\citepage}{m}{p.~#1}
\let\oldfleqn\fleqn
\let\oldendfleqn\endfleqn
\newlength\fleqnindent
\RenewDocumentEnvironment{fleqn}{}{\oldfleqn[\fleqnindent]}{\oldendfleqn}

\newcommand{\tdefiff}{if} 

\setlist[description]{labelindent=0.5\leftmargin}

\newcommand*{\todo}[1]{\relax\ifmmode\textsf{todo: #1}\else\textsf{Todo: #1.}\fi}
\NewDocumentCommand{\tCIP}{}{C\kern0.034em I\kern0.025em P}
\NewDocumentCommand{\titCIP}{}{C\kern0.014em I\kern0.009em P}

\iftutex

[
    Extension = {.otf},
    UprightFont={*-Regular},
    UprightFeatures = {
        SizeFeatures = {
            {Size={-9}, Font={NewCM08-Regular}},
            {Size={9-}},
        },
    },
    BoldFont={*-Bold},
    ItalicFont={*-Italic},
    ItalicFeatures = {
        SizeFeatures = {
            {Size={-9}, Font={NewCM08-Italic}},
            {Size={9-}},
        },
    },
    BoldItalicFont={*-BoldItalic},
SmallCapsFeatures={Numbers=OldStyle},
SlantedFont={*-Regular},
    SlantedFeatures={FakeSlant=0.25},
    BoldSlantedFont={*-Bold},
    BoldSlantedFeatures={FakeSlant=0.25},
ItalicFeatures = { SmallCapsFont={NewCM10-Regular.otf}, SmallCapsFeatures={Letters=SmallCaps, FakeSlant=0.25}},
    BoldItalicFeatures = { SmallCapsFont={NewCM10-Bold.otf}, SmallCapsFeatures={Letters=SmallCaps, FakeSlant=0.25}},
]

[
    ItalicFont=NewCMSans10-Oblique.otf,
    BoldFont=NewCMSans10-Bold.otf,
    BoldItalicFont=NewCMSans10-BoldOblique.otf,
    SlantedFont=NewCMSans10-Oblique.otf,
    BoldSlantedFont=NewCMSans10-BoldOblique.otf,
    SmallCapsFeatures={Numbers=OldStyle},
]

[
    ItalicFont=NewCMMono10-Italic.otf,
    BoldFont=NewCMMono10-Bold.otf,
    BoldItalicFont=NewCMMono10-BoldOblique.otf,
    SlantedFont=NewCMMono10-Regular.otf,
    SlantedFeatures={FakeSlant=0.25},
    BoldSlantedFont=NewCMMono10-Bold.otf,
    BoldSlantedFeatures={FakeSlant=0.25},
    SmallCapsFeatures={Numbers=OldStyle},
]

 \fi
\ifluatex
    \directlua{require("modules.hyphenate")}
 \fi

\zcRefTypeSetup{equation}{abbrev}
\zcLanguageSetup{english}{
  cap = true ,
  type = page ,
    cap = false ,
  type = equation ,
    cap = false ,
}
\zcLanguageSetup{dutch}{
  cap = true ,
  type = page ,
    cap = false ,
  type = equation ,
    cap = false ,
}
\zcLanguageSetup{english}{
  type = commentary ,
    Name-sg = Commentary ,
    name-sg = commentary ,
    Name-pl = Commentaries ,
    name-pl = commentaries ,
  type = claim ,
    Name-sg = Claim ,
    name-sg = claim ,
    Name-pl = Claims ,
    name-pl = claims ,
  type = axiom ,
    Name-sg = Axiom ,
    name-sg = axiom ,
    Name-pl = Axiomata ,
    name-pl = axiomata ,
  type = notation ,
    Name-sg = Notation ,
    name-sg = notation ,
    Name-pl = Notations ,
    name-pl = notations ,
  type = convention ,
    Name-sg = Convention ,
    name-sg = convention ,
    Name-pl = Conventions ,
    name-pl = conventions ,
  type = terminology,
    Name-sg = Terminology ,
    name-sg = terminology ,
    Name-pl = Terminologies ,
    name-pl = terminologies ,
  type = problem ,
    Name-sg = Problem ,
    name-sg = problem ,
    Name-pl = Problems ,
    name-sg = problems ,
  type = construction ,
    Name-sg = Construction ,
    name-sg = construction ,
    Name-pl = Constructions ,
    name-pl = constructions ,
  type = lemma ,
Name-pl = Lemmata ,
    name-pl = lemmata ,
}
\zcLanguageSetup{dutch}{
  type = commentary ,
    gender = f ,
    Name-sg = Toelichting ,
    name-sg = toelichting ,
    Name-pl = Toelichtingen ,
    name-pl = toelichtingen ,
  type = claim ,
    gender = m ,
    Name-sg = Claim ,
    name-sg = claim ,
    Name-pl = Claims ,
    name-pl = claims ,
  type = axiom ,
    gender = n ,
    Name-sg = Axioma ,
    name-sg = axioma ,
    Name-pl = Axiomata ,
    name-pl = axiomata ,
  type = notation ,
    gender = f ,
    Name-sg = Notatie ,
    name-sg = notatie ,
    Name-pl = Notaties ,
    name-pl = notaties ,
  type = convention ,
    gender = f ,
    Name-sg = Conventie ,
    name-sg = conventie ,
    Name-pl = Conventies ,
    name-pl = conventies ,
  type = terminology,
    gender = f ,
    Name-sg = Terminologie ,
    name-sg = terminologie ,
    Name-pl = Terminologieën ,
    name-pl = terminologieën ,
  type = problem ,
    gender = n ,
    Name-sg = Probleem ,
    name-sg = probleem ,
    Name-pl = Problemen ,
    name-sg = problemen ,
  type = construction ,
    gender = f ,
    Name-sg = Constructie ,
    name-sg = constructie ,
    Name-pl = Constructies ,
    name-pl = constructies ,
  type = lemma ,
Name-pl = Lemmata ,
    name-pl = lemmata ,
}

\zcLanguageSetup{german}{
  type = commentary,
    gender = f ,
    case = N,
      Name-sg = Erläuterung ,
      Name-pl = Erläuterungen ,
    case = G ,
      Name-sg = Erläuterung ,
      Name-pl = Erläuterungen ,
    case = D ,
      Name-sg = Erläuterung ,
      Name-pl = Erläuterungen ,
    case = A ,
      Name-sg = Erläuterung ,
      Name-pl = Erläuterungen ,
  type = claim,
    gender = m ,
    case = N,
      Name-sg = Claim ,
      Name-pl = Claims ,
    case = G ,
      Name-sg = Claims ,
      Name-pl = Claims ,
    case = D ,
      Name-sg = Claim ,
      Name-pl = Claims ,
    case = A ,
      Name-sg = Claim ,
      Name-pl = Claims ,
  type = axiom,
    gender = n ,
    case = N,
      Name-sg = Axiom ,
      Name-pl = Axiome ,
    case = G ,
      Name-sg = Axioms ,
      Name-pl = Axiome ,
    case = D ,
      Name-sg = Axiom ,
      Name-pl = Axiomen ,
    case = A ,
      Name-sg = Axiom ,
      Name-pl = Axiome ,
  type = notation,
    gender = f ,
    case = N,
      Name-sg = Notation ,
      Name-pl = Notationen ,
    case = G ,
      Name-sg = Notation ,
      Name-pl = Notationen ,
    case = D ,
      Name-sg = Notation ,
      Name-pl = Notationen ,
    case = A ,
      Name-sg = Notation ,
      Name-pl = Notationen ,
  type = convention,
    gender = f ,
    case = N,
      Name-sg = Konvention ,
      Name-pl = Konventionen ,
    case = G ,
      Name-sg = Konvention ,
      Name-pl = Konventionen ,
    case = D ,
      Name-sg = Konvention ,
      Name-pl = Konventionen ,
    case = A ,
      Name-sg = Konvention ,
      Name-pl = Konventionen ,
  type = terminology,
    gender = f ,
    case = N ,
      Name-sg = Terminologie ,
      Name-pl = Terminologien ,
    case = G ,
      Name-sg = Terminologie ,
      Name-pl = Terminologien ,
    case = D ,
      Name-sg = Terminologie ,
      Name-pl = Terminologien ,
    case = A ,
      Name-sg = Terminologie ,
      Name-pl = Terminologien ,
  type = problem,
    gender = n ,
    case = N ,
      Name-sg = Problem ,
      Name-pl = Probleme ,
    case = G ,
      Name-sg = Problems ,
      Name-pl = Probleme ,
    case = D ,
      Name-sg = Problem ,
      Name-pl = Problemen ,
    case = A ,
      Name-sg = Problem ,
      Name-pl = Probleme ,
  type = construction,
    gender = f ,
    case = N ,
      Name-sg = Konstruktion ,
      Name-pl = Konstruktionen ,
    case = G ,
      Name-sg = Konstruktion ,
      Name-pl = Konstruktionen ,
    case = D ,
      Name-sg = Konstruktion ,
      Name-pl = Konstruktionen ,
    case = A ,
      Name-sg = Konstruktion ,
      Name-pl = Konstruktionen ,
}

\iftoggle{book}{
    \newtheorem{theorem}{Theorem}[chapter]
}{
    \newtheorem{theorem}{Theorem}[section]
}

\newtheorem{corollary}[theorem]{Corollary}
\newtheorem{lemma}[theorem]{Lemma}

\theoremstyle{definition}

\newtheorem{construction}[theorem]{Construction}

\newtheorem{definition}[theorem]{Definition}

\theoremstyle{remark}

\newtheorem{remark}[theorem]{Remark}

\AddToHook{env/corollary/begin}{\zcsetup{countertype={theorem=corollary}}}
\AddToHook{env/lemma/begin}{\zcsetup{countertype={theorem=lemma}}}
\AddToHook{env/proposition/begin}{\zcsetup{countertype={theorem=proposition}}}

\AddToHook{env/axiom/begin}{\zcsetup{countertype={theorem=axiom}}}
\AddToHook{env/construction/begin}{\zcsetup{countertype={theorem=construction}}}
\AddToHook{env/convention/begin}{\zcsetup{countertype={theorem=convention}}}
\AddToHook{env/definition/begin}{\zcsetup{countertype={theorem=definition}}}
\AddToHook{env/example/begin}{\zcsetup{countertype={theorem=example}}}
\AddToHook{env/notation/begin}{\zcsetup{countertype={theorem=notation}}}
\AddToHook{env/problem/begin}{\zcsetup{countertype={theorem=problem}}}
\AddToHook{env/terminology/begin}{\zcsetup{countertype={theorem=terminology}}}

\AddToHook{env/claim/begin}{\zcsetup{countertype={theorem=claim}}}
\AddToHook{env/commentary/begin}{\zcsetup{countertype={theorem=commentary}}}
\AddToHook{env/remark/begin}{\zcsetup{countertype={theorem=remark}}}

\NewTranslationFallback{Case}{Case}
\NewTranslation{dutch}{Case}{Geval}
\NewTranslation{english}{Case}{Case}
\NewTranslation{german}{Case}{Fall}

\newlist{proofcases}{enumerate}{2}
\setlist[proofcases,1]{wide=0.5em,label=\emph{\protect\GetTranslation{Case}~\arabic*.}}
\setlist[proofcases,2]{wide=1em,label*=\emph{\alph*}}

\zcsetup{
    countertype = {
        proofcasesi = case ,
        proofcasesii = case ,
    } ,
    counterresetby = {
        proofcasesii = proofcasesi ,
    }
}

\newlist{thmenumerate}{enumerate}{2}
\setlist[thmenumerate,1]{label=\normalfont(\roman*)}
\setlist[thmenumerate,2]{label=\normalfont(\alph*)}

\zcsetup{
    countertype = {
        thmenumeratei = item ,
        thmenumerateii = item ,
    } ,
    counterresetby = {
        thmenumerateii = thmenumeratei ,
    }
}

\NewDocumentEnvironment{subproof}{o}{\IfValueTF{#1}{\begin{proof}[#1]}{\begin{proof}[Pf]}}{\end{proof}}

\let\lleft\mleft
\let\rright\mright
\let\mmiddle\middle

\NewDocumentCommand{\mand}{}{\quad \text{and} \quad}

\NewDocumentCommand\lfalse{}{\bot}
\NewDocumentCommand\ltrue{}{\top}
\NewDocumentCommand\limplies{}{\DOTSB\rightarrow}
\NewDocumentCommand\liff{}{\DOTSB\leftrightarrow}
\NewDocumentCommand\ldiamond{}{\mdwhtdiamond}
\NewDocumentCommand\lbox{}{\mdwhtsquare}
\NewDocumentCommand\lcircle{}{\mdwhtcircle}

\NewDocumentCommand{\lAND}{}{\bigwedge}

\NewDocumentCommand\suchthat{}{\;\,}
\NewDocumentCommand\suchthatshort{}{\;}

\ExplSyntaxOn
\keys_define:nn { my/math/quantifier }
    {
        quantifier .tl_set:N = \l__my_math_quantifier_quant_tl,
        quantifier .value_required:n = true,
        exists .meta:n = {quantifier = {\exists}},
        forall .meta:n = {quantifier = {\forall}},
        doparen .bool_set:N = \l__my_math_quantifier_doparen_bool,
        doparen .default:n = true,
        nextquant .bool_set:N = \l__my_math_quantifier_nextquant_bool,
        nextquant .default:n = true,
    }

\keys_precompile:nnN { my/math/quantifier } { quantifier = {}, doparen = false, nextquant = false }
\l__my_math_quantifier_reset_defaults_tl

\NewDocumentCommand{\MakeQuantifier}{m m m}
    {
        \group_begin:

        \tl_use:N \l__my_math_quantifier_reset_defaults_tl

        \keys_set:nn { my/math/quantifier } { #1 }

        \bool_if:NTF \l__my_math_quantifier_nextquant_bool
        { \tl_use:N \l__my_math_quantifier_quant_tl #2 \suchthatshort #3 }
        {
            \bool_if:NTF \l__my_math_quantifier_doparen_bool
            { \tl_use:N \l__my_math_quantifier_quant_tl #2 \suchthatshort\left\lbrack #3 \right\rbrack }
            { \tl_use:N \l__my_math_quantifier_quant_tl #2 . \suchthat #3 }
        }

        \group_end:
    }

\NewDocumentCommand{\ForAll}{o m m}
    {
        \IfValueTF { #1 }
        { \MakeQuantifier{ #1, forall }{ #2 }{ #3 } }
        { \MakeQuantifier{ forall }{ #2 }{ #3 } }
    }

\NewDocumentCommand{\Exists}{o m m}
    {
        \IfValueTF { #1 }
        { \MakeQuantifier{ #1, exists }{ #2 }{ #3 } }
        { \MakeQuantifier{ exists }{ #2 }{ #3 } }
    }
\ExplSyntaxOff

\NewDocumentCommand{\Set}{m o}{\IfValueTF{#2}{\lleft\{#1\;\mmiddle|\;#2\rright\}}{\lleft\{#1\rright\}}}

\NewVariableClass\mClassDelim

\NewObject\mClassDelim\paren[left par=\lparen, right par=\rparen]

\NewObject\mClassDelim\tuple[left par=\langle, right par=\rangle]
\NewObject\mClassDelim\homotuple[left par=\langle, right par=\rangle]
\NewObject\mClassDelim\hetrotuple[left par=\langle, right par=\rangle]
\NewObject\mClassDelim\structuple[left par=\langle\nobreak, right par=\rangle]

\NewObject\mClassDelim\abs[left par=\lvert, right par=\rvert]
\NewObject\mClassDelim\norm[left par=\lVert , right par=\rVert]
\NewObject\mClassDelim\card[left par=\lvert, right par=\rvert]
\NewObject\mClassDelim\sem[left par=\lBrack, right par=\rBrack]

\NewVariableClass\mClassMath

\NewDocumentCommand\mathstylealg{m}{\symbf{#1}}
\NewDocumentCommand\mathstylea{m}{#1}
\NewDocumentCommand\mathstylecd{m}{#1}
\NewDocumentCommand\mathstylef{m}{#1}
\NewDocumentCommand\mathstylefr{m}{\symfrak{#1}}
\NewDocumentCommand\mathstylefrm{m}{\symfrak{#1}}
\NewDocumentCommand\mathstylefrl{m}{\symfrak{#1}}
\NewDocumentCommand\mathstylelog{m}{#1}
\NewDocumentCommand\mathstylel{m}{#1}
\NewDocumentCommand\mathstylelth{m}{#1}
\NewDocumentCommand\mathstylen{m}{#1}
\NewDocumentCommand\mathstyleod{m}{#1}
\NewDocumentCommand\mathstylereal{m}{#1}
\NewDocumentCommand\mathstylerel{m}{#1}
\NewDocumentCommand\mathstyles{m}{#1}
\NewDocumentCommand\mathstyletop{m}{\symfrak{#1}}

\NewObject\mClassMath\algA{\mathstylealg{A}}
\NewObject\mClassMath\algB{\mathstylealg{B}}
\NewObject\mClassMath\algC{\mathstylealg{C}}
\NewObject\mClassMath\algFree{\mathstylealg{Fr}}
\NewObject\mClassMath\algBAtwo{\mathstylealg{2}}
\NewObject\mClassMath\algMinTropical{\mathstylealg{T}}
\NewObject\mClassMath\algMinTropicalExt{\bar{\mathstylealg{T}}}

\DeclareObject\mClassMath\aa{\mathstylea{a}} \NewObject\mClassMath\ab{\mathstylea{b}}
\NewObject\mClassMath\ac{\mathstylea{c}}

\DeclareObject\mClassMath\cdkappa{\mathstylecd{\kappa}}
\DeclareObject\mClassMath\cdlambda{\mathstylecd{\lambda}}

\NewObject\mClassMath\fe{\mathstylef{e}}
\NewObject\mClassMath\ff{\mathstylef{f}}
\NewObject\mClassMath\fg{\mathstylef{g}}
\NewObject\mClassMath\fh{\mathstylef{h}}
\NewObject\mClassMath\fk{\mathstylef{k}}
\NewObject\mClassMath\fl{\mathstylef{l}}
\NewObject\mClassMath\fp{\mathstylef{p}}
\NewObject\mClassMath\fq{\mathstylef{q}}
\NewObject\mClassMath\fr{\mathstylef{r}}
\NewObject\mClassMath\fs{\mathstylef{s}}
\NewObject\mClassMath\ft{\mathstylef{t}}
\NewObject\mClassMath\fu{\mathstylef{u}}
\NewObject\mClassMath\fiota{\mathstylef{\iota}}
\NewObject\mClassMath\fsigma{\mathstylef{\sigma}}
\NewObject\mClassMath\fSeqShift{\mathstylef{\sigma}}
\SemantexRecordObject{\fComp}
\NewDocumentCommand{\fComp}{m}{
    \SemantexRecordSource { \fComp { #1 } }
    \paren { #1 }
}
\SemantexRecordObject{\fDom}
\NewDocumentCommand{\fDom}{m}{
    \SemantexRecordSource { \fDom { #1 } }
    \operatorname{dom}\paren{ #1 }
}
\SemantexRecordObject{\fIm}
\NewDocumentCommand{\fIm}{m}{
    \SemantexRecordSource { \fIm { #1 } }
    \operatorname{Im}\paren{ #1 }
}
\SemantexRecordObject{\fSupp}
\NewDocumentCommand{\fSupp}{m}{
    \SemantexRecordSource { \fSupp { #1 } }
    \operatorname{supp}\paren{ #1 }
}

\NewObject\mClassMath\fVal{\symfrak{V}}
\NewObject\mClassMath\fSpan{\mathrm{sp}}
\NewObject\mClassMath\fTy{\mathrm{ty}}
\NewObject\mClassMath\fClust{\mathrm{clu}}

\NewObject\mClassMath\frF{\mathstylefr{F}}
\NewObject\mClassMath\frG{\mathstylefr{G}}
\NewObject\mClassMath\frH{\mathstylefr{H}}
\NewObject\mClassMath\frT{\mathstylefr{T}}

\NewObject\mClassMath\frlQ{\mathstylefrl{Q}}
\NewObject\mClassMath\frlS{\mathstylefrl{S}}
\NewObject\mClassMath\frlT{\mathstylefrl{T}}

\NewObject\mClassMath\frmM{\mathstylefrm{M}}
\NewObject\mClassMath\frmN{\mathstylefrm{N}}

\NewObject\mClassMath\lBox{\lbox}
\NewObject\mClassMath\lDiamond{\ldiamond}
\NewObject\mClassMath\lNext{\lcircle}

\NewObject\mClassMath\logLambda{\mathstylelog{\Lambda}}
\NewObject\mClassMath\logForm{\mathrm{Form}}
\NewObject\mClassMath\logK{\mathbf{K}}
\NewObject\mClassMath\logKWeakTrans{\mathbf{wK4}}
\NewObject\mClassMath\logKWeakTransLin{\mathbf{wK4.3}}
\NewObject\mClassMath\logKTrans{\mathbf{K4}}
\NewObject\mClassMath\logKTransLin{\mathbf{K4.3}}
\SemantexRecordObject{\logKTransBoundedWidth}
\DeclareDocumentCommand{\logKTransBoundedWidth}{m}{\SemantexRecordSource {\logKTransBoundedWidth {#1}}\UseClassInCommand\mClassMath{\mathbf{K4BW}_{#1}}}
\NewObject\mClassMath\logKRefl{\mathbf{KT}}
\NewObject\mClassMath\logKReflTrans{\mathbf{S4}}
\NewObject\mClassMath\logKReflTransLin{\mathbf{S4.3}}
\NewObject\mClassMath\logKReflTransLinFinal{\mathbf{S4.3.1}}
\NewObject\mClassMath\logKReflTransConfl{\mathbf{S4.2}}
\NewObject\mClassMath\logKReflTransConflFinal{\mathbf{S4.2.1}}
\NewObject\mClassMath\logKReflTransFinal{\mathbf{S4.1}}
\SemantexRecordObject{\logKReflTransBoundedWidth}
\DeclareDocumentCommand{\logKReflTransBoundedWidth}{m}{\SemantexRecordSource {\logKReflTransBoundedWidth {#1}}\UseClassInCommand\mClassMath{\mathbf{S4BW}_{#1}}}
\NewObject\mClassMath\logGrz{\mathbf{Grz}}
\NewObject\mClassMath\logGrzConfl{\mathbf{Grz.2}}
\NewObject\mClassMath\logGrzLin{\mathbf{Grz.3}}
\SemantexRecordObject{\logGrzBoundedWidth}
\DeclareDocumentCommand{\logGrzBoundedWidth}{m}{\SemantexRecordSource {\logGrzBoundedWidth {#1}}\UseClassInCommand\mClassMath{\mathbf{GrzBW}_{#1}}}
\NewObject\mClassMath\logGl{\mathbf{GL}}
\NewObject\mClassMath\logGlLin{\mathbf{GL.3}}
\SemantexRecordObject{\logGlBoundedWidth}
\DeclareDocumentCommand{\logGlBoundedWidth}{m}{\SemantexRecordSource {\logGlBoundedWidth {#1}}\UseClassInCommand\mClassMath{\mathbf{GLBW}_{#1}}}

\NewObject\mClassMath\lalpha{\mathstylel{\alpha}}
\NewObject\mClassMath\lbeta{\mathstylel{\beta}}
\NewObject\mClassMath\lgamma{\mathstylel{\gamma}}
\NewObject\mClassMath\ldelta{\mathstylel{\delta}}
\NewObject\mClassMath\lepsilon{\mathstylel{\epsilon}}
\NewObject\mClassMath\leta{\mathstylel{\eta}}
\NewObject\mClassMath\lphi{\mathstylel{\varphi}}
\NewObject\mClassMath\lpsi{\mathstylel{\psi}}
\NewObject\mClassMath\lchi{\mathstylel{\chi}}
\NewObject\mClassMath\lFrame{\mathstylel{\chi}}
\NewObject\mClassMath\lSubFrame{\mathstylel{\beta}}
\NewObject\mClassMath\lCofSubFrame{\mathstylel{\gamma}}

\NewObject\mClassMath\lap{\mathstylel{p}}
\NewObject\mClassMath\laq{\mathstylel{q}}
\NewObject\mClassMath\lar{\mathstylel{r}}
\NewObject\mClassMath\lvar{\mathrm{var}}

\NewObject\mClassMath\lthS{\mathstylelth{S}}
\NewObject\mClassMath\lthT{\mathstylelth{T}}
\NewObject\mClassMath\lthGamma{\mathstylelth{\Gamma}}
\NewObject\mClassMath\lthDelta{\mathstylelth{\Delta}}
\NewObject\mClassMath\lthSigma{\mathstylelth{\Sigma}}
\NewObject\mClassMath\lthSubform{\mathrm{SubF}}\NewObject\mClassMath\lthProp{\symbb{P}}

\NewObject\mClassMath\nii{\mathstylen{i}}
\NewObject\mClassMath\nj{\mathstylen{j}}
\NewObject\mClassMath\nk{\mathstylen{k}}
\NewObject\mClassMath\nl{\mathstylen{l}}
\NewObject\mClassMath\nm{\mathstylen{m}}
\NewObject\mClassMath\nn{\mathstylen{n}}
\NewObject\mClassMath\nN{\mathstylen{N}}
\NewObject\mClassMath\np{\mathstylen{p}}

\NewObject\mClassMath\odalpha{\mathstyleod{\alpha}}
\NewObject\mClassMath\odbeta{\mathstyleod{\beta}}
\NewObject\mClassMath\odgamma{\mathstyleod{\gamma}}
\NewObject\mClassMath\oddelta{\mathstyleod{\delta}}
\NewObject\mClassMath\odlambda{\mathstyleod{\lambda}}

\NewObject\mClassMath\realr{\mathstylereal{r}}
\NewObject\mClassMath\realalpha{\mathstylereal{\alpha}}
\NewObject\mClassMath\realbeta{\mathstylereal{\beta}}
\NewObject\mClassMath\realgamma{\mathstylereal{\gamma}}
\NewObject\mClassMath\reallambda{\mathstylereal{\lambda}}

\NewObject\mClassMath\relD{\mathstylerel{D}}
\NewObject\mClassMath\relE{\mathstylerel{E}}
\NewObject\mClassMath\relF{\mathstylerel{F}}
\NewObject\mClassMath\relG{\mathstylerel{G}}
\NewObject\mClassMath\relR{\mathstylerel{R}}
\NewObject\mClassMath\relS{\mathstylerel{S}}
\NewObject\mClassMath\relZ{\mathstylerel{Z}}
\NewObject\mClassMath\relEquiv{\equiv}
\SetupObject\relEquiv{
    define keys={
        {not}{symbol=\nequiv},
    },
}
\NewObject\mClassMath\relPreEquiv{\sqsubseteq}
\SetupObject\relPreEquiv{
    define keys={
        {not}{symbol=\nsqsubseteq},
        {strict}{symbol=\sqsubset},
    },
}
\NewObject\mClassMath\relPre{\preceq}
\SetupObject\relPre{
    define keys={
        {not}{symbol=\mathrel{\not\preceq}},
        {strict}{symbol=\prec},
    },
}
\SemantexRecordObject{\relComp}
\NewDocumentCommand{\relComp}{m}{
    \SemantexRecordSource { \relComp { #1 } }
    \paren { #1 }
}
\NewObject\mClassMath\relBisim{\underline{\leftrightarrow}}

\NewObject\mClassMath\sEmpty{\emptyset}
\NewObject\mClassMath\sA{\mathstyles{A}}
\NewObject\mClassMath\sB{\mathstyles{B}}
\NewObject\mClassMath\sC{\mathstyles{C}}
\NewObject\mClassMath\sD{\mathstyles{D}}
\NewObject\mClassMath\sE{\mathstyles{E}}
\NewObject\mClassMath\sF{\mathstyles{F}}
\NewObject\mClassMath\sG{\mathstyles{G}}
\NewObject\mClassMath\sH{\mathstyles{H}}
\NewObject\mClassMath\sI{\mathstyles{I}}
\NewObject\mClassMath\sJ{\mathstyles{J}}
\NewObject\mClassMath\sK{\mathstyles{K}}
\NewObject\mClassMath\sL{\mathstyles{L}}
\NewObject\mClassMath\sM{\mathstyles{M}}
\NewObject\mClassMath\sN{\mathstyles{N}}
\NewObject\mClassMath\sO{\mathstyles{O}}
\NewObject\mClassMath\sP{\mathstyles{P}}
\NewObject\mClassMath\sS{\mathstyles{S}}
\NewObject\mClassMath\sT{\mathstyles{T}}
\NewObject\mClassMath\sU{\mathstyles{U}}
\NewObject\mClassMath\sV{\mathstyles{V}}
\NewObject\mClassMath\sW{\mathstyles{W}}
\NewObject\mClassMath\sX{\mathstyles{X}}
\NewObject\mClassMath\sY{\mathstyles{Y}}
\NewObject\mClassMath\sZ{\mathstyles{Z}}
\NewObject\mClassMath\sTwo{\symbb{2}}
\NewObject\mClassMath\sTypes{\mathrm{Ty}}
\NewObject\mClassMath\sTopClust{\mathrm{TopClust}}
\NewObject\mClassDelim\sEquivClass[left par=\lbrack, right par=\rbrack]
\NewObject\mClassDelim\sNumber[left par=\lbrack, right par=\rbrack]

\NewObject\mClassMath\sPowerSet{\mathcal{P}}

\NewObject\mClassMath\topF{\mathstyletop{F}}
\NewObject\mClassMath\topG{\mathstyletop{G}}
\NewObject\mClassMath\topX{\mathstyletop{X}}
\NewObject\mClassMath\topY{\mathstyletop{Y}}
\NewObject\mClassMath\topZ{\mathstyletop{Z}}

\NewObject\mClassMath\vi{i}
\NewObject\mClassMath\vr{r}
\NewObject\mClassMath\vs{s}
\NewObject\mClassMath\vt{t}
\NewObject\mClassMath\vu{u}
\NewObject\mClassMath\vv{v}
\NewObject\mClassMath\vw{w}
\NewObject\mClassMath\vx{x}
\NewObject\mClassMath\vy{y}
\NewObject\mClassMath\vz{z}

\NewObject\mClassMath\Naturals{\symbb{N}}
\NewObject\mClassMath\Integers{\symbb{Z}}
\NewObject\mClassMath\Rationals{\symbb{Q}}
\NewObject\mClassMath\Reals{\symbb{R}}
\NewObject\mClassMath\ExtReals{\overline{\symbb{R}}}

\NewSymbolClass\mClassOperators

\NewObject\mClassOperators\fBinProd{\times}

\NewObject\mClassOperators\aSRplus{\oplus}
\NewObject\mClassOperators\aSRtimes{\otimes}
\NewObject\mClassOperators\aSRzero{0}
\NewObject\mClassOperators\aSRone{1}

\NewObject\mClassOperators\aLGmeet{\wedge}
\NewObject\mClassOperators\aLGjoin{\vee}
\NewObject\mClassOperators\aLGplus{+}
\NewObject\mClassOperators\aLGminus{-}
\NewObject\mClassOperators\aLGzero{e}
\NewObject\mClassOperators\aLGbottom{\bot}
\NewObject\mClassOperators\aLGtop{\top}

\NewObject\mClassOperators\sMinus{\setminus}
\NewObject\mClassOperators\sCartProd{\times}
\NewObject\mClassOperators\sUnion{\cup}
\NewObject\mClassOperators\sDisjointUnion{\amalg}
\NewObject\mClassOperators\sIntersection{\cap}
\NewObject\mClassOperators\sSetUnion{\bigcup\:}
\NewObject\mClassOperators\sSetDisjointUnion{\coprod\:}
\NewObject\mClassOperators\sSetIntersection{\bigcap\:}
\NewObject\mClassOperators\sLimitUnion{\bigcup}
\NewObject\mClassOperators\sLimitDisjointUnion{\coprod}
\NewObject\mClassOperators\sLimitIntersection{\bigcap}
\NewObject\mClassOperators\sSubsetEq{\subseteq}
\NewObject\mClassOperators\sSubsetStrict{\subsetneq}
\NewObject\mClassOperators\sSupersetEq{\supseteq}
\NewObject\mClassOperators\sSupersetStrict{\supsetneq}
\NewObject\mClassOperators\sEquinumerous{\sim}

\NewObject\mClassOperators\Equal{=}
\NewObject\mClassOperators\LessStrict{<}
\NewObject\mClassOperators\LessEq{\leq}
\NewObject\mClassOperators\GreaterStrict{>}
\NewObject\mClassOperators\GreaterEq{\geq}

\NewObject\mClassOperators\join{\vee}
\NewObject\mClassOperators\meet{\wedge}
\NewObject\mClassOperators\JOIN{\bigvee}
\NewObject\mClassOperators\MEET{\bigwedge}

\NewObject\mClassOperators\lProves{\vdash}
\NewObject\mClassOperators\lNotProves{\nvdash}
\NewObject\mClassOperators\lValidates{\vDash}
\NewObject\mClassOperators\lNotValidates{\nvDash}
\NewObject\mClassOperators\Isomorphic{\cong}
\NewObject\mClassOperators\NotIsomorfic{\ncong}
\NewObject\mClassOperators\ElementaryEquiv{\equiv}

\NewObject\mClassOperators\Sum{\sum}

\NewObject\mClassOperators\vequiv{\sim}

\SetupClass\mClassDelim{
    output=\mClassMath,
    define keys={
        {prime}{output options={prime}},
        {'}{output options={'}},
        {''}{output options={''}},
        {'''}{output options={'''}},
        {fn}{set arg single keys={\cdot}},
    },
    define keys[1]={
        {default}{output options={default={#1}}},
        {lower}{output options={lower={#1}}},
    },
}

\SetupClass\mClassMath{
    output=\mClassMath,
    define keys={
        {vec}{command=\vec},
        {tilde}{command=\tilde},
        {widetilde}{command=\widetilde},
        {hat}{command=\hat},
        {star}{upper=\ast},
        {overline}{command=\overline},
        {box}{lower={\lBox}},
        {converse}{upper=\mathrm{op}},
        {inv}{upper={-1}},
        {preimage}{upper={-1}},
        {gen}{overline},
        {complex alg}{upper=\ast},
        {negative}{upper={\leq 0}},
        {Delta}{upper={\Delta}},
        {refl closure}{overline},
        {trans closure}{upper=+},
        {refl trans closure}{star},
        {irrefl closure}{upper=\circ},
{can}{upper=\mathrm{c}},
{domain}{command=\fDom},
        {dom}{domain},
{image}{command=\fIm},
{card}{return, command=\card},
{F}{lower={\mathrm{f}}},
{P}{lower={\mathrm{p}}},
{M}{lower={\mathrm{m}}},
    },
    define keys[1]={
        {pow}{upper=#1},
        {restrict}{right return,symbol put right={\mathopen{}\upharpoonright\mathclose{} {#1}}},
        {quotient}{right return,symbol put right={\mathopen{}\mathslash\mathclose{} {#1}}},
    },
}

\SetupClass\mClassOperators{
    define keys={
        {overline}{command=\overline},
    },
    define keys[1]={
        {default}{output options={default={#1}}},
        {lower}{output options={lower={#1}}},
        {upper}{output options={upper={#1}}},
    },
}

\ExplSyntaxOn

\NewDocumentCommand{\ElimCrit}{o} {
    \IfValueTF { #1 }
    {
        \group_begin:
\keys_set:nn { mymath/elimcrit } { #1 }
\symcal{E}
\tl_if_empty:NTF \l__mymath_elimcrit_model_tl {} {
            \c_math_subscript_token { \tl_use:N \l__mymath_elimcrit_model_tl }
        }
\c_math_superscript_token { \l__mymath_elimcrit_predefined_cs:n { \tl_use:N \l__mymath_elimcrit_theory_tl } }
\tl_if_empty:NTF \l__mymath_elimcrit_upset_tl {} {
            \paren{ \tl_use:N \l__mymath_elimcrit_upset_tl }
        }
        \group_end:
    }
    { \symcal{E} }
}

\keys_define:nn { mymath/elimcrit }
    {
        model .tl_set:N = \l__mymath_elimcrit_model_tl,
        model .value_required:n = true,
        upset .tl_set:N = \l__mymath_elimcrit_upset_tl,
        upset .value_required:n = true,
        theory .tl_set:N = \l__mymath_elimcrit_theory_tl,
        theory .value_required:n = true,

        predefined .choice:,
        predefined / none .code:n = \cs_set_protected_nopar:Nn \l__mymath_elimcrit_predefined_cs:n {},
        predefined / weak .code:n = \cs_set_protected_nopar:Nn \l__mymath_elimcrit_predefined_cs:n {##1\textrm{-weak}},
        predefined / strong .code:n = \cs_set_protected_nopar:Nn \l__mymath_elimcrit_predefined_cs:n {##1\textrm{-strong}},
        predefined .initial:n = none,

        weak .meta:n = {predefined=weak},
        weak .value_forbidden:n = true,
        strong .meta:n = {predefined=strong},
        strong .value_forbidden:n = true,

        m .meta:n = {model=#1},
        u .meta:n = {upset=#1},
        t .meta:n = {theory=#1},
    }

\ExplSyntaxOff

\RequirePackage{tikz-cd}
\usetikzlibrary{calc}
\usetikzlibrary{decorations.pathmorphing}
\usetikzlibrary{spath3}

\tikzset{curve/.style={settings={#1},to path={(\tikztostart)
    .. controls ($(\tikztostart)!\pv{pos}!(\tikztotarget)!\pv{height}!270:(\tikztotarget)$)
    and ($(\tikztostart)!1-\pv{pos}!(\tikztotarget)!\pv{height}!270:(\tikztotarget)$)
    .. (\tikztotarget)\tikztonodes}},
    settings/.code={\tikzset{quiver/.cd,#1}
        \def\pv##1{\pgfkeysvalueof{/tikz/quiver/##1}}},
    quiver/.cd,pos/.initial=0.35,height/.initial=0}

\tikzset{between/.style n args={2}{/tikz/spath/at end path construction={
    \tikzset{spath/split at keep middle={current}{#1}{#2}}
}}}

\tikzset{tail reversed/.code={\pgfsetarrowsstart{tikzcd to}}}
\tikzset{2tail/.code={\pgfsetarrowsstart{Implies[reversed]}}}
\tikzset{2tail reversed/.code={\pgfsetarrowsstart{Implies}}}
\tikzset{no body/.style={/tikz/dash pattern=on 0 off 1mm}}

\SetupClass\mClassMath{
    define keys[1]={
        {subst}{right return, symbol put right={\lbrack #1 \rbrack}},
        {gen subfr}{upper=#1},
    },
}

\ifxetex
    \SetupObject\relPre{
        define keys={ {not}{symbol=\mathrel{\mathpalette\overlaynot\preceq}} }
    }
\fi

\NewObject\mClassMath\frCluster{\mathstylefr{C}}
\SetupObject\frCluster{
    define keys={
        {withtop}{overline},
    },
}
\NewObject\mClassMath\lCluFr{\mathstylel{\gamma}}
\SetupObject\lCluFr{
    define keys={
        {withtop}{command=\bar},
    },
}
\NewObject\mClassMath\modcomp{\mathstylel{\tau}}

\NewObject\mClassMath\logInt{\mathstylelog{\mathbf{Int}}}
\NewObject\mClassMath\logKC{\mathstylelog{\mathbf{KC}}}
\NewObject\mClassMath\logCl{\mathstylelog{\mathbf{Cl}}}
\NewObject\mClassMath\logLPtwo{\mathstylelog{\mathbf{LP}_2}}
\NewObject\mClassMath\logLV{\mathstylelog{\mathbf{LV}}}
\NewObject\mClassMath\logLC{\mathstylelog{\mathbf{LC}}}
\NewObject\mClassMath\logLS{\mathstylelog{\mathbf{LS}}}
\NewObject\mClassMath\logFm{\mathstylelog{\mathbf{Fm}}}
\NewObject\mClassMath\logKReflTransSym{\mathstylelog{\mathbf{S5}}}

\NewObject\mClassMath\logGamma{\mathstylelog{\Gamma}}

\NewObject\mClassMath\boolsub{\mathrm{cl}}
 
\newcommand*{\thetitle}{Interpolation above S4}
\newcommand*{\thedate}{\today}
\author[1]{Simon Santschi\,\orcidlink{0009-0003-9364-5149}\thanks{Supported by the Swiss National Science Foundation (SNSF), grant no.\@ 200021\textunderscore215157.}}
\author[1]{Niels C.~Vooijs\,\orcidlink{0000-0002-4515-9694}}

\affil[1]{Mathematical Institute, University of Bern, Bern, Switzerland}
\date{\thedate}
\title{\thetitle}

\begin{document}
    \setlength\fleqnindent{2\parindent}
    \maketitle
    \begin{abstract}
        \noindent
        We complete Maksimova's classification of the normal extensions of $\logKReflTrans$ with interpolation.
        In particular, we prove Craig interpolation for the six extensions of $\logKReflTrans$ for which Craig interpolation was still open.
        The proof strategy builds upon the ideas of \citeauthor{Smorynski1978-Beths-theorem-and-self-referential-sentences}, but employs a novel approach using Fine's frame formulas for splitting clusters.
\end{abstract}

    \section{Introduction}

\Textcite{Maksimova1978-Craigs-theorem} proved the celebrated result that exactly eight superintuitionistic logics have the Craig interpolation property. Building on this work, in \cite{Maksimova1980-interpolation-theorems,Maksimova1981-interpolation-theorems-sufficient-conditions,Maksimova1987-interpolation-in-normal-modal-logics} she considered the Craig interpolation property and the deductive interpolation property for normal extensions of $\logKReflTrans$. Maksimova showed that between 31 and 37 normal extension of $\logKReflTrans$ have the Craig interpolation property and between 43 and 49 have the deductive interpolation property, leaving the status of the interpolation properties open for six logics.

A logic $\logLambda$ is said to have the \emph{Craig interpolation property (or \titCIP{} for short)} if for all formulas $\lphi,\lpsi$ with $\lphi\limplies \lpsi \in \logLambda$, there exists a formula $\lchi$ whose atomic propositions appear in both $\lphi$ and $\lpsi$, such that $\lphi \limplies \lchi,\lchi\limplies \lpsi \in \logLambda$. In this case the formula $\lchi$ is called an \emph{interpolant} for the implication $\lphi\limplies \lpsi$.

\Textcite{Maksimova1980-interpolation-theorems} showed that if a normal extension of $\logKReflTrans$ has the \tCIP{}, then its intuitionistic fragment also has the \tCIP{}. Moreover, \textcite{Maksimova1978-Craigs-theorem} characterized the eight superintuitionistic logics with the \tCIP{}, see \zcref{tab:int-extensions}.

\begin{table}
    \centering
    \begin{tblr}{colspec={ll}}
        \toprule
        Logic   & Aximomatization \\
        \midrule
        $\logInt$&
        Intuitionistic logic
        \\
        $\logKC$ &
        $\logInt + {\lnot\lap \lor \lnot\lnot\lap}$
        \\
        $\logLPtwo$ &
        $\logInt + {\lap \lor (\lap \limplies (\laq \lor \lnot\laq))}$
        \\
        $\logLV$ &
        $\logLPtwo + {(\lap \limplies \laq) \lor (\laq \limplies \lap) \lor (\lap \liff \lnot\laq)}$
        \\
        $\logLS$ &
        $\logLPtwo + {\lnot\lap \lor \lnot\lnot\lap}$
        \\
        $\logLC$ &
        $\logInt + {(\lap \limplies \laq) \lor (\laq \limplies \lap)}$
        \\
        $\logCl$ &
        $\logInt + {\lap \lor \lnot\lap}$
        \\
        $\logFm$ & $\logInt + \lfalse$
        \\
        \bottomrule
    \end{tblr}
    \caption{The superintuitionistic logics with the Craig interpolation property.}\zlabel{tab:int-extensions}
\end{table}

For this reason Maksimova introduced the following notation: for $\nm,\nn \in \Naturals \sUnion \Set{\omega}$ and a superintuitionistic logic $\logLambda$, $\Gamma(\logLambda,\nm,\nn)$ denotes the normal extension of $\logKReflTrans$ that is complete with respect to the finite $\modcomp{\logLambda}$-frames for which the final clusters have size at most $\nm$ and the non-final clusters have size at most $\nn$, where $\modcomp{\logLambda}$ denotes the smallest modal companion of $\logLambda$. For example, we have $\Gamma(\logInt,\omega,\omega) = \logKReflTrans$, $\Gamma(\logInt,1,1) = \logGrz$, $\Gamma(\logKC,\omega,\omega) = \logKReflTransConfl$, and $\Gamma(\logCl,\omega,0) = \logKReflTransSym$.

Maksimova showed that if a normal extension of $\logKReflTrans$ has the \tCIP{}, then it is of the form $\Gamma(\logLambda,\nm,\nn)$, where $\logLambda$ has the \tCIP{} and $\nm,\nn \in \Set{1,2,\omega}$ \textendash{} when $\logLambda = \logCl$, the \(\nn\) parameter is inconsequential, so we write \(\Gamma(\logCl, \nm, 0)\).
In fact, Maksimova showed that if a normal extension of $\logKReflTrans$ has the \tCIP{} then it is one of 37 logics of this form none of which satisfies $\logLambda=\logLC$.
For 29 of these logics, it is known that they have the \tCIP{} \autocite{GM2005}, and for two more \textcite[\citepage{256}]{GM2005} claim the \tCIP{} (without proof).
For the majority of the 29 logics, the \tCIP{} was proven by \textcite{Maksimova1981-interpolation-theorems-sufficient-conditions}; the most recent progress being \cite{Maksimova1987-interpolation-in-normal-modal-logics}.
This provides a complete characterization of the \tCIP{} for normal extensions of \(\logGrz\), and an almost complete one for the normal extensions of \(\logKReflTrans\).
An overview of the situation can be found in \cite[Chapter~8, in particular Theorem~8.45]{GM2005}.

\zcref[S]{tab:open cases} lists the extensions of $\logKReflTrans$ for which the status of the \tCIP{} is still open \textendash{} the two logics in the final row are the ones claimed to have the \tCIP{} in \cite{GM2005}\footnote{To the best of our knowledge no proof of the \tCIP{} for these two logics has ever been published.}.
We show that all of these logics have the \tCIP{}, completing the characterization of the normal extensions of \(\logKReflTrans\) with the \tCIP{} and solving Problem~14.3 of \textcite[\citepage{469}]{CZ1997}.
In fact, we give a uniform proof of the \tCIP{} for any of the logics $\Gamma(\logLambda,\nm,\nn)$ with $\logLambda \in \Set{\logInt,\logKC}$ and $\nm,\nn \in \Set{1,2,\omega}$.

\begin{table}
    \centering
    \begin{tblr}{colspec={cc}}
        \toprule
        $\logInt$ companions   & $\logKC$ companions \\
        \midrule
        $\Gamma(\logInt,\omega,2)$ &
        $\Gamma(\logKC,\omega,2)$
        \\
        $\Gamma(\logInt,2,2)$ &
        $\Gamma(\logKC,2,2)$
        \\
        $\Gamma(\logInt,1,2)$ &
        $\Gamma(\logKC,1,2)$
        \\
        $\Gamma(\logInt,2,1)$ &
        $\Gamma(\logKC,2,1)$
        \\
        \bottomrule
    \end{tblr}
    \caption{The extensions of $\logKReflTrans$ for which the \tCIP{} is open.}
    \zlabel{tab:open cases}
\end{table}

\paragraph{Method.}
Our approach is based on the one that \textcite{Smorynski1978-Beths-theorem-and-self-referential-sentences} used to show the \tCIP{} for $\logGl$.
This approach was also used by \textcite{Boolos1980-systems-of-modal-logic-with-provability-interpretations} to prove the \tCIP{} for $\logGrz$ and by \textcite{Maksimova1987-interpolation-in-normal-modal-logics} for two further normal extensions of $\logKReflTrans$.

Smoryński's method relies on syntactically constructing finite models.
First it is assumed that an implication does not have an interpolant.
Using this, a finite countermodel for the implication is constructed (see \zcref{sec:smorynski}).
For logics that impose restrictions on the clusters in their frames,
it is usually necessary to tweak the definition of the accessibility relation so that the constructed model is a model of the logic, cf.\@ \cite{Smorynski1978-Beths-theorem-and-self-referential-sentences,Boolos1980-systems-of-modal-logic-with-provability-interpretations,Maksimova1987-interpolation-in-normal-modal-logics}.
Thus, every implication lacking an interpolant is invalid in the logic, so the \tCIP{} is obtained.

The key difference of our approach compared to the previous ones is that we employ a more flexible and semantic approach towards abiding by the restrictions on clusters.
We first construct a standard Smoryński model \(\frmM\), which is only a model for \(\logKReflTrans\) (or \(\logKReflTransConfl\)).
The cluster restrictions are then expressed using Fine's frame formulas \cite{Fine1974-ascending-chain-S4-logics}, and it follows from the Truth Lemma for Smoryński models that \(\frmM\) validates (sufficiently large) substitutions of these frame formulas.
Using these, we show that every cluster contains a small enough \emph{adequate} subset of points, which together do not rely on other points in the cluster to witnesses the satisfaction of formulas.
This then finally allows us to modify the accessibility relation into one that does not contain clusters that are too large, while preserving the Truth Lemma; an operation which we will call \emph{refining} the clusters.

\paragraph{Outline.}
The paper is structured as follows.
\zcref{sec:prelims} contains the preliminaries needed for the rest of the paper. In particular, we recall the definition and fundamental properties of subframe formulas and the definition of the relevant normal extensions of $\logKReflTrans$ for which we show the \tCIP{}.
In \zcref{sec:smorynski}, we present Smoryński's construction of finite models, which we call Smoryński models, and consider the fundamental properties of these models.

In \zcref{sec:refine-clusters}, we establish a way to reduce the size of clusters in suitable finite models.
In \zcref{sec:main-thm}, we combine the methods of the two previous sections to prove the main theorem of this paper. We show that the six normal extensions of $\logKReflTrans$ for which the status of the \tCIP{} is open have the \tCIP{}.
 
    \section{Preliminaries}\label{sec:prelims}
    \paragraph{Basic notions.}
We assume that the reader is familiar with the elementary notions of relational semantics for normal unimodal logic, see, e.g.,\@ \cite{CZ1997,BdRV2001}.

A \emph{preorder} is a transitive reflexive relation.
For a preorder $\relPre$ on a set $\sX$, we will write $\vx \relPre[strict] \vy$ if $\vx\relPre \vy$ and $\vy \relPre[not] \vx$.  A non-empty subset $\sC \sSubsetEq \sX$ is a \emph{$\relPre$-cluster} or just \emph{cluster} if for all $\vx,\vz\in \sC$, we have $\vx \relPre \vz$, and $\vx \relPre \vy \relPre \vz$ implies $\vy \in \sC$.
For clusters $\sC,\sD\sSubsetEq \sX$, we write $\sC \relPre \sD$ if there exist $\vx \in \sC$ and $\vy \in \sD$ such that $\vx \relPre \vy$, and define $\sC \relPre[strict] \sD$ in the obvious way.

A cluster $\sC\sSubsetEq \sX$ is called \emph{final} if there does not exist a cluster $\sD\sSubsetEq \sX$ with $\sC \relPre[strict] \sD$. It is called \emph{minimal} with respect to some property if every cluster $\sD\sSubsetEq \sX$ with $\sD \relPre[strict] \sC$ does not have the property.

Formulas are built from atomic propositions using constants $\lfalse$, $\ltrue$, unary connectives  $\lnot$, $\lBox$, and binary connectives $\land$, $\lor$.
The connectives $\limplies$, $\liff$, and $\lDiamond$ are defined as usual.

For a formula $\lchi$ in atomic propositions $\lap[0],\dots, \lap[\nn-1]$ and formulas $\lphi[0],\dots,\lphi[\nn-1]$ we will write $\lchi[subst={\lphi[0],\dots,\lphi[\nn-1]}]$ for the formula obtained by substituting each instance of $\lap[\nii]$ by $\lphi[\nii]$ for $\nii =0,\dots,\nn-1$. For a logic $\logLambda$ and a formula $\lphi$ we will also write $\lProves[\logLambda] \lphi$ if $\lphi \in \logLambda$.

Recall that the logic $\logKReflTrans$ is complete with respect to preorder models, i.e., models \(\frmM = \structuple{\sX, \relPre, \fVal}\), where $\structuple{\sX,\relPre}$ is a preordered set.
The logic $\logKReflTransConfl$ is $\logKReflTrans \oplus \lDiamond\lBox \lap \limplies \lBox\lDiamond\lap$ and is complete with respect to preorder models \(\frmM = \structuple{\sX, \relPre, \fVal}\) that are \emph{confluent}, i.e.,
for all $\vx,\vy,\vz \in \sX$, if $\vx \relPre \vy$ and $\vx \relPre \vz$, then there exists $\vw\in \sX$ with $\vy \relPre \vw$ and $\vz \relPre \vw$.

Let $\frmM = \structuple{\frF, \fVal}$ be a model. For a formula $\lphi$, we write $\sem[\frmM]{\lphi} \coloneqq \Set{\vx \in \sX}[\frmM,\vx \lValidates \lphi]$.
A \emph{definable variant} of \(\frmM\) is a model \(\frmM[']\) on \(\frF\) such that \(\sem{\lap}[\frmM[']]\) is definable in \(\frmM\) for every atomic proposition \(\lap\), i.e., there exists a formula $\lphi$ such that $\sem{\lap}[\frmM[']] = \sem{\lphi}[\frmM]$. Finally, for $\vx \in X$, we denote the \emph{submodel generated by $\vx$} by $\frmM[gen subfr=\vx]$  and its underlying frame by $\frF[gen subfr=\vx]$.

\paragraph{Frame formulas.}
The relevant logics above \(\logKReflTrans\) can be axiomatized using Fine's \cite[Section~2]{Fine1974-ascending-chain-S4-logics} \emph{frame formulas}.
For $\nn \in \Naturals$ we write $\sNumber{\nn} \coloneqq \Set{0,\dots,\nn-1}$.

\begin{definition}[Frame formula]\zlabel{def:frame-formula}
    Let \(\nn \in \Naturals\) and \(\frF = \structuple{\sNumber{\nn}, \relPre}\) be a finite rooted preorder such that \(0\) is a root of \(\frF\).
    Define \(\lFrame{\frF}\) to be the negation of the conjunction of
    \begin{tasks}[style = thmenumerate, label-align = right](2)
        \task \(\lap[0]\),
        \task \(\lBox{\lap[0] \lor \dots \lor \lap[\nn - 1]}\),
        \task \(\lBox{\lap[\nii] \limplies \lnot\lap[\nj]}\) for \(\nii \neq \nj\),
        \task \(\lBox{\lap[\nii] \limplies \lDiamond\lap[\nj]}\) for \(\nii \relPre \nj\), and
        \task \(\lBox{\lap[\nii] \limplies \lnot\lDiamond\lap[\nj]}\) for \(\nii \relPre[not] \nj\).
    \end{tasks}
\end{definition}

Note that we take the negation of how \textcite{Fine1974-ascending-chain-S4-logics} defined the frame formula, as is standard convention in the literature \autocite{Zakharyaschev1992-canonical-formulas-for-K4-part-1,Bezhanishvili2006-lattices-of-intermediate-and-cylindric-modal-logics} and matches the characteristic formulas of \textcite{Jankov1968-strongly-independent-superintuitionistic-calculi}.

The frame formula of \(\frG\) is refuted on a frame \(\frF\) iff \(\frG\) is the p-morphic image of a generated subframe of \(\frF\) \autocite[Lemma~2.1]{Fine1974-ascending-chain-S4-logics}.
The algebraic approach of \textcite{Rautenberg1980-splitting-lattices-of-logics} generalizes this to general frames.
However, we will need precise control over which substitution of \(\lFrame{\frF}\) is refuted in a point in a concrete model.

\begin{lemma}\zlabel{lemma:p-morphism-fine-formula}
    Let \(\frmM\) be a model on a preorder \(\frF = \structuple{\sX, \relPre[\frF]}\), and \(\frG = \structuple{\sNumber{\nn}, \relPre[\frG]}\) be a finite rooted preorder.
    Then, for formulas \(\lphi[0], \dots,\lphi[\nn - 1]\) and \(\vx \in \sem{\lphi[0]}[\frmM]\), we have \(\vx \notin \sem{\lFrame{\frG}[subst={\lphi[0], \dots, \lphi[\nn - 1]}]}[\frmM]\) iff there exists a p-morphism \(\ff\) from \(\frF[gen subfr=\vx]\) to \(\frG\) such that \(\ff[preimage]{\nii} = \sem{\lphi[\nii]}[\frmM[{gen subfr=\vx}]]\).
\end{lemma}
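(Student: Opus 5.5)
Unfolding the definition, \(\vx \notin \sem{\lFrame{\frG}[subst={\lphi[0], \dots, \lphi[\nn - 1]}]}[\frmM]\) holds iff \(\vx\) satisfies all of the conjuncts (i)--(v) with \(\lap[\nii]\) replaced by \(\lphi[\nii]\). Conjunct (i) is given by the assumption \(\vx \in \sem{\lphi[0]}[\frmM]\). Since \(\relPre[\frF]\) is reflexive and transitive, the box-conjuncts (ii)--(v) hold at \(\vx\) iff the corresponding implications hold at every point of \(\frF[gen subfr=\vx]\). Moreover, truth of formulas is preserved in generated submodels, so we may evaluate everything in \(\frmM[gen subfr=\vx]\). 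Writing \(\sY\) for the domain of \(\frF[gen subfr=\vx]\) and \(\sA[\nii] \coloneqq \sem{\lphi[\nii]}[\frmM[{gen subfr=\vx}]]\), the negated frame formula holds at \(\vx\) iff the following hold:
\begin{tasks}[style = thmenumerate, label-align = right](2)
    \task \(\vx \in \sA[0]\),
    \task the \(\sA[\nii]\) cover \(\sY\),
    \task the \(\sA[\nii]\) are pairwise disjoint,
    \task if \(\nii \relPre[\frG] \nj\), every \(\vy \in \sA[\nii]\) sees some point of \(\sA[\nj]\), and
    \task if \(\nii \relPre[\frG,not] \nj\), no \(\vy \in \sA[\nii]\) sees a point of \(\sA[\nj]\).
\end{tasks}

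For the forward direction, assume these conditions. By (ii) and (iii), \(\ff(\vy) \coloneqq\) the unique \(\nii\) with \(\vy \in \sA[\nii]\) is a well-defined total map \(\sY \to \sNumber{\nn}\) with \(\ff[preimage]{\nii} = \sA[\nii]\).
\begin{itemize}
    \item \emph{Monotonicity:} if \(\vy \relPre[\frF] \vz\) with \(\vy \in \sA[\nii]\) and \(\vz \in \sA[\nj]\), then by (v) we cannot have \(\nii \relPre[\frG,not] \nj\), so \(\ff(\vy) \relPre[\frG] \ff(\vz)\).
    \item \emph{Back condition:} if \(\ff(\vy) = \nii \relPre[\frG] \nj\), then (iv) provides \(\vz\) with \(\vy \relPre[\frF] \vz\) and \(\ff(\vz) = \nj\).
\end{itemize}
Hence \(\ff\) is a p-morphism.

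For the converse, let \(\ff\) be a p-morphism with \(\ff[preimage]{\nii} = \sA[\nii]\). Then:
\begin{itemize}
    \item (ii) and (iii) hold because the fibres of a total function partition \(\sY\);
    \item (iv) is the back condition;
    \item (v) is the contrapositive of monotonicity.
\end{itemize}
Together with (i), this shows \(\vx\) refutes the substituted frame formula.

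Surjectivity of \(\ff\) is not needed: the statement only asks for a p-morphism. If one wants it, it follows from \(\ff(\vx) = 0\) being a root of \(\frG\) together with the back condition.

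There is no real obstacle here. The only point needing care is the reduction of the boxed conjuncts at \(\vx\) to universal statements over \(\frF[gen subfr=\vx]\), and this uses exactly reflexivity and transitivity of \(\relPre[\frF]\). The same reduction is what lets us compute the sets \(\sA[\nii]\) in the generated submodel rather than in \(\frmM\).
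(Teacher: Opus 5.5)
Your proof is correct. The forward direction coincides with the paper's: \(\ff\) is defined by the unique index given by (ii) and (iii), and (iv) and (v) give the back condition and monotonicity.

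The converse is where you take a different route. The paper first recasts the statement in terms of models. It takes the definable variant \(\frmM[']\) of \(\frmM[gen subfr=\vx]\) in which \(\lap[\nii]\) is interpreted as \(\sem{\lphi[\nii]}[\frmM[{gen subfr=\vx}]]\), and the model \(\frmN\) on \(\frG\) in which \(\lap[\nii]\) is interpreted as \(\Set{\nii}\). The required maps are then exactly the p-morphisms of models from \(\frmM[']\) to \(\frmN\). The paper observes that the root \(0\) refutes \(\lFrame{\frG}\) in \(\frmN\), and transfers this to \(\vx\in\ff[preimage]{0}\) by invariance of truth under p-morphisms.

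You instead verify (ii)--(v) directly from the fibre partition, the back condition and monotonicity. Your version shows transparently that the lemma is a dictionary between the five conjuncts and the p-morphism conditions on the partition \(\Set{\sA[\nii]}\), and it needs no invariance theorem. The paper's version delegates the converse to a standard fact. The price is checking that \(0\) satisfies the conjuncts in \(\frmN\), which is the same verification, carried out on \(\frG\) rather than on \(\frF[gen subfr=\vx]\).

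One remark on phrasing. In this paper a p-morphism onto \(\frG\) is meant to be surjective, in line with Fine's ``p-morphic image''. The paper proves surjectivity in the forward direction and checks it each time the lemma is applied in the pattern lemmata. So your closing observation is part of the proof rather than optional. Your argument for it, via \(\ff{\vx}=0\) and the back condition, is exactly the paper's.
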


The proof is essentially the same as for Fine's result \autocite[Lemma~2.1]{Fine1974-ascending-chain-S4-logics}.

\begin{proof}
    Define a model \(\frmN\) on \(\frG\) with atomic propositions \(\lap[0] , \dots, \lap[\nn - 1]\) by defining \(\sem{\lap[\nii]}[\frmN] \coloneqq \Set{\nii}\), and define a definable variant \(\frmM[']\) of \(\frmM[gen subfr=\vx]\), again with atomic propositions \(\lap[0] , \dots, \lap[\nn - 1]\), by \(\sem{\lap[\nii]}[\frmM[']] \coloneqq \sem{\lphi[\nii]}[\frmM[{gen subfr=\vx}]]\).
    Note that \(\sem{ \lFrame{\frG}[subst={\lphi[0], \dots, \lphi[\nn - 1]}] }[\frmM[{gen subfr=\vx}]] = \sem{ \lFrame{\frG} }[\frmM[']]\), and that a p-morphism as in the lemma statement is exactly a p-morphism of models from \(\frmM[']\) to \(\frmN\).
    Therefore, it suffices to prove that \(\vx \notin \sem{ \lFrame{\frG} }[\frmM[']]\) iff there exists a p-morphism \(\ff\) from \(\frmM[']\) to \(\frmN\).

    (\(\Rightarrow\))
    Suppose \(\vx \notin \sem{ \lFrame{\frG} }[\frmM[']]\), so \(\vx\) satisfies the formulas listed in \zcref{def:frame-formula}.
    First note that for \(\vy \in \sX\) with \(\vx \relPre \vy\), by formulas (ii) and (iii), there exists a unique \(\nii \in \sNumber{\nn}\) such that \(\vy \in \sem{\lap[\nii]}[\frmM[']]\).
    Define \(\ff{\vy}\) to be this \(\nii\).
    We show that \(\ff\) is a p-morphism.

    Let \(\vy \in \sX\) and \(\nj \in \sNumber{\nn}\).
    Then \(\vy \in \sem{\lap[\ff{\vy}]}[\frmM[']]\).
    By formulas (iv) and (v), \(\ff{\vy} \relPre \nj\) iff \(\vy \in \sem{\lDiamond\lap[\nj]}[\frmM[']]\), given the back- and forth-conditions.

    For surjectivity, let \(\nii \in \sNumber{\nn}\).
    Since \(0\) is a root of \(\frF\), we have \(0 \relPre \nii\).
    Moreover, \(\vx \in \sem{\lphi[0]}[\frmM]\), so \(\vx \in \sem{\lap[0]}[\frmM[']]\) and hence \(\ff{\vx} = 0\).
    By the back-condition there is a \(\ff\)-preimage of \(\nii\).

    (\(\Leftarrow\))
    It is easy to check that the point \(0\) in \(\frmN\) satisfies the formulas listed in \zcref{def:frame-formula}, so it refutes the negation of the conjunction.
    Therefore, \(0 \notin \sem{\lFrame{\frG}}[\frmN]\).
    Now if \(\ff\) is a p-morphism from \(\frmM[']\) to \(\frmN\), then it preserves satisfaction, so \(\ff[preimage]{0} \sIntersection \sem{ \lFrame{\frG} }[\frmM[']] = \sEmpty\).
    Since \(\vx \in \sem{\lphi[0]}[\frmM]\) and hence \(\vx \in \sem{\lap[0]}[\frmM[']]\), we get \(\vx \in \ff[preimage]{0}\) and conclude that \(\vx \notin \sem{ \lFrame{\frG} }[\frmM[']]\).
\end{proof}

\begin{definition}
   For $\nn \geq 1$, we define
   \begin{itemize}
       \item $\frCluster[\nn] = \structuple{\sNumber{\nn}, \relPre}$, where $\nii \relPre \nj$ for any \(\nii, \nj \in \sNumber{\nn}\), and
       \item $\frCluster[withtop,\nn] = \structuple{\sNumber{\nn + 1}, \relPre}$, where $\nii \relPre \nj$ iff either \(\nii, \nj \in \sNumber{\nn}\) or $\nii = \nj = \nn$.
   \end{itemize}
   Moreover, for $\nn\in\Naturals$, we define
   \[
   \lCluFr[\nn] = \lFrame{\frCluster[\nn+1]} \quad \text{and} \quad \lCluFr[withtop,\nn] = \lFrame{\frCluster[withtop,\nn+1]},
   \]
   We also define $\lCluFr[\omega] = \lCluFr[withtop,\omega] = \ltrue$.
\end{definition}

For a superintuitionistic logic $\logLambda$ and $\nm,\nn \leq \omega$, \textcite{Maksimova1980-interpolation-theorems} (see also \cite{GM2005}) defines the logic $\Gamma(\logLambda,\nm,\nn) = \modcomp{\logLambda} \oplus \lCluFr[\nm] \oplus \lCluFr[withtop,\nn]$, where $\modcomp{\logLambda}$ is the smallest modal companion of $\logLambda$. As noted in the introduction, $\Gamma(\logLambda,\nm,\nn)$ can be equivalently described as the logic complete with respect to the finite $\modcomp{\logLambda}$-frames such that the size of final clusters is bounded by $\nm$ and the size of non-final clusters is bounded by $\nn$.
We are concerned with the cases where \(\logLambda \in \Set{\logInt,\logKC}\), where we note that $\modcomp{\logInt} = \logKReflTrans$ and $\modcomp{\logKC} = \logKReflTransConfl$.

\begin{figure}
    \centering

    \begin{tikzpicture}[
    fcirc/.style={circle,draw=black, minimum size = 4.5pt, inner sep = 0pt}
    ]

    \node (north) at (0.4,0.01) {};
    \node (east) at (1.11,0) {};
    \node (south) at (0.4,-0.01) {};
    \node (west) at (-0.01,0) {};

    \node[draw, rounded corners = 8pt,inner sep=1.6mm,fit= (north) (south) (east) (west)] {};

    \node[fcirc] (0) at (0,0) {};
    \node[fcirc] (1) at (0.4,0) {};
    \node[fcirc] (2) at (1.1,0) {};
    \node[] (dots) at (0.75,0) {${\dots}$};
    \end{tikzpicture}
    \qquad
    \begin{tikzpicture}[
    fcirc/.style={circle,draw=black, minimum size = 4.5pt, inner sep = 0pt}
    ]

    \node (north) at (0.4,0.01) {};
    \node (east) at (1.11,0) {};
    \node (south) at (0.4,-0.01) {};
    \node (west) at (-0.01,0) {};

    \node[draw, rounded corners = 8pt,inner sep=1.6mm,fit= (north) (south) (east) (west)] {};

    \node[fcirc] (0) at (0,0) {};
    \node[fcirc] (1) at (0.4,0) {};
    \node[fcirc] (2) at (1.1,0) {};
    \node[fcirc] (3) at (0.55,1) {};
    \node[] (dots) at (0.75,0) {${\dots}$};

    \draw[->] (0.55,0.3) -- (0.55,0.9);
    \end{tikzpicture}
    \caption{The frames $\frCluster[\nn]$ and $\frCluster[\nn,withtop]$}
    \label{fig:cluster-frames}
\end{figure}
 
    \section{Smoryński Models}\label{sec:smorynski}
    As discussed in the introduction, our proof strategy to show the Craig interpolation property for the remaining open cases is to proceed by contradiction, and show that if an implication $\lphi\limplies \lpsi$ does not have an interpolant, then there is a model of the given logic that falsifies it.
In this section, we consider an approach due to \textcite{Smorynski1978-Beths-theorem-and-self-referential-sentences} for syntactically constructing finite models which we call Smoryński models. A Smoryński model is constructed from two sets $\lthSigma[1],\lthSigma[2]$ of formulas that are \enquote{essentially} finite and satisfy certain closure properties. These two sets will roughly correspond to the subformula-closure of $\lphi$ and $\lpsi$, respectively, and the lack of an interpolant will yield a point in the model that refutes $\lphi \limplies \lpsi$.

We fix, for $\nii=1,2$, a subformula-closed set of formulas \(\lthSigma[\nii]\), such that for each \(\lphi \in \lthSigma[\nii]\), both \(\lnot\lphi\) and \(\lbox\lphi\) are in \(\lthSigma[\nii]\), and \(\lthSigma[\nii]\) contains only finitely many formulas up to logical equivalence in $\logKReflTrans$. Moreover, we fix a normal extension $\logLambda$ of $\logKReflTrans$ and write $\lthSigma \coloneq \lthSigma[1] \sUnion \lthSigma[2]$.

\begin{definition}
    Let $\sT \sSubsetEq \lthSigma$ and $\sT[\nii] = \sT \sIntersection \lthSigma[\nii]$ for $\nii = 1,2$. We call $\sT$
    \begin{itemize}
        \item \emph{\(\homotuple{\logLambda,\lthSigma[1],\lthSigma[2]}\)-separable} \tdefiff{} there exists a formula $\lpsi$ with atomic propositions in $\lthSigma[1]\sIntersection \lthSigma[2]$ such that $\lAND \sT[1] \limplies \lpsi, \lAND \sT[2] \limplies \lnot\lpsi$ are valid in $\logLambda$;
        \item \emph{\(\homotuple{\logLambda,\lthSigma[1],\lthSigma[2]}\)-inseparable} \tdefiff{} $\sT$ is not \(\homotuple{\logLambda,\lthSigma[1],\lthSigma[2]}\)-separable;
        \item \emph{\(\homotuple{\logLambda,\lthSigma[1],\lthSigma[2]}\)-maximal} \tdefiff{} $\sT$ is \emph{\(\homotuple{\logLambda,\lthSigma[1],\lthSigma[2]}\)-inseparable} and for each $\lphi \in \lthSigma[\nii]$ ($\nii =1,2$), $\lphi \in \sT[\nii]$ or $\lnot\lphi \in \sT[\nii]$.
    \end{itemize}
\end{definition}

Note that for separability, we can assume w.l.o.g.\@ that \(\sT[1]\) and \(\sT[2]\) are disjoint, in the following sense:

\begin{lemma}\zlabel{rmk:disjointness-for-separability}
	If $\sT$ is \(\homotuple{\logLambda,\lthSigma[1],\lthSigma[2]}\)-separable, then there exists a formula $\lpsi$ with atomic propositions in $\lthSigma[1] \sIntersection \lthSigma[2]$ such that $\lAND \sT[1] \limplies \lpsi, \lAND \paren{\sT[2] \sMinus \sT[1]} \limplies \lnot\lpsi$ are valid in $\logLambda$.
\end{lemma}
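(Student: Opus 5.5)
Suppose $\sT$ is separable, witnessed by a formula $\lpsi[0]$ with atomic propositions in $\lthSigma[1]\sIntersection\lthSigma[2]$ such that $\lAND \sT[1] \limplies \lpsi[0]$ and $\lAND \sT[2] \limplies \lnot\lpsi[0]$ are in $\logLambda$. The plan is to absorb the shared part $\sT[1]\sIntersection\sT[2]$ into the interpolant. Since $\sT[1]\sIntersection\sT[2] \sSubsetEq \lthSigma[1]\sIntersection\lthSigma[2]$, all atomic propositions of its formulas are common atomic propositions: each formula in $\lthSigma[1]\sIntersection\lthSigma[2]$ lies in both subformula-closed sets, so its atoms lie in $\lthSigma[1]\sIntersection\lthSigma[2]$. (This is presumably how "atomic propositions in $\lthSigma[1]\sIntersection\lthSigma[2]$" is meant.)

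First, reduce to finitely many formulas. Each $\lthSigma[\nii]$ contains only finitely many formulas up to $\logKReflTrans$-equivalence, so we may choose a finite $\sS \sSubsetEq \sT[1]\sIntersection\sT[2]$ containing a representative of each equivalence class meeting $\sT[1]\sIntersection\sT[2]$. Then $\lAND \sS$ is a well-defined formula and is $\logLambda$-equivalent to $\lAND(\sT[1]\sIntersection\sT[2])$. The same finiteness lets us treat $\lAND\sT[1]$ and $\lAND\sT[2]$ as genuine formulas; the paper implicitly does this already.

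Next, set $\lpsi \coloneqq \lpsi[0] \lor \lnot\lAND\sS$. Its atomic propositions are common, since those of $\lpsi[0]$ are by assumption and those of $\sS$ are by the first paragraph. For the first condition, $\lAND\sT[1] \limplies \lpsi[0]$ is in $\logLambda$, hence so is $\lAND\sT[1]\limplies \lpsi$. For the second, we need $\lAND(\sT[2]\sMinus\sT[1]) \limplies \lnot\lpsi[0] \land \lAND\sS$. This follows propositionally from $\lAND\sT[2] \limplies \lnot\lpsi[0]$, after rewriting $\lAND\sT[2]$ as $\lAND(\sT[2]\sMinus\sT[1]) \land \lAND(\sT[1]\sIntersection\sT[2])$ and using $\lAND\sS \liff \lAND(\sT[1]\sIntersection\sT[2])$. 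Concretely, $\lAND(\sT[2]\sMinus\sT[1]) \land \lAND\sS \limplies \lnot\lpsi[0]$, and therefore $\lAND(\sT[2]\sMinus\sT[1]) \limplies (\lnot\lAND\sS \lor \lnot\lpsi[0])$. Dually, choosing $\lpsi \coloneqq \lpsi[0] \lor \lnot\lAND\sS$ gives $\lnot\lpsi \equiv \lnot\lpsi[0]\land\lAND\sS$, and $\lAND(\sT[2]\sMinus\sT[1]) \limplies \lnot\lpsi$ fails in general. So the correct choice is the opposite one: $\lpsi \coloneqq \lAND\sS \limplies \lpsi[0]$. Then $\lAND\sT[1] \limplies \lpsi$ is trivial, and $\lnot\lpsi \equiv \lAND\sS \land \lnot\lpsi[0]$ still does not follow from $\lAND(\sT[2]\sMinus\sT[1])$ alone.

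This is the main obstacle: the shared formulas carry information needed on both sides. The fix is to keep $\lpsi \coloneqq \lAND\sS \limplies \lpsi[0]$, which is $\logLambda$-equivalent to $\lAND\sS\limplies\lpsi[0]$, and to check the second condition directly. We need $\lAND(\sT[2]\sMinus\sT[1]) \limplies (\lAND\sS \land \lnot\lpsi[0])$, and the missing conjunct $\lAND\sS$ cannot be derived from $\sT[2]\sMinus\sT[1]$. So neither disjunctive nor implicational absorption alone works.

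The repair is to take $\lpsi \coloneqq \lnot\lAND\sS \limplies \ldots$ in the other direction: $\lpsi \coloneqq \lAND\sS \land \lpsi[0]$ fails the first condition only if $\lAND\sT[1]\not\limplies\lAND\sS$, but $\sS\sSubsetEq\sT[1]$, so the first condition holds. For the second, $\lnot\lpsi \equiv \lnot\lAND\sS \lor \lnot\lpsi[0]$, and it follows from $\lAND(\sT[2]\sMinus\sT[1])$ by the propositional step above: either $\lAND\sS$ fails, or it holds and then all of $\sT[2]$ holds, giving $\lnot\lpsi[0]$. This is the formula I would use, $\lpsi \coloneqq \lAND\sS\land\lpsi[0]$. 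All checks are in classical propositional logic inside $\logLambda$. The only delicate point is the finiteness reduction to $\sS$ via the equivalence classes.
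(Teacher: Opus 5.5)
Your final choice $\psi \coloneqq \bigwedge S \land \psi_0$ is correct and is exactly the paper's construction, which takes $\psi \coloneqq \neg(\bigwedge T' \rightarrow \neg\varphi)$ with $T' = T_1 \cap T_2$, i.e.\ it absorbs the shared formulas into the separating formula as a conjunct. Your remarks that $T_1 \cap T_2$ contributes only common atoms and that one can pass to finitely many representatives fill in details the paper leaves implicit; in a write-up, delete the two abandoned candidates $\psi_0 \lor \neg\bigwedge S$ and $\bigwedge S \rightarrow \psi_0$, since the first is stated with the false claim that its second condition ``follows propositionally''.
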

\begin{proof}
    By definition there is \(\lphi\) with atomic propositions in $\lthSigma[1] \sIntersection \lthSigma[2]$ such that \(\lAND \sT[1] \limplies \lphi\) and \(\lAND \sT[2] \limplies \lnot\lphi\) are valid in $\logLambda$.
    Define \(\sT['] \coloneqq \sT[1] \sIntersection \sT[2]\).
    Then, by classical reasoning, we obtain \(\lAND \sT[1] \limplies \lnot\paren{\lAND\sT['] \limplies \lnot\lphi}\) and \(\lAND \paren{\sT[2] \sMinus \sT[']} \limplies \paren{\lAND\sT['] \limplies \lnot\lphi}\).
    Now define \(\lpsi \coloneqq \lnot\paren{\lAND\sT['] \limplies \lnot\lphi}\).
\end{proof}

\begin{lemma}[{\cite[Lemma~5.16]{GM2005}}]\zlabel{lemma:maximal-separable-pairs}
\hfill
    \begin{thmenumerate}
        \item\zlabel{lemma:maximal-separable-pairs:extension} Every \(\homotuple{\logLambda,\lthSigma[1],\lthSigma[2]}\)-inseparable subset of $\lthSigma$ can be extended to a \(\homotuple{\logLambda,\lthSigma[1],\lthSigma[2]}\)-maximal subset.
        \item\zlabel{lemma:maximal-separable-pairs:equivalence} For all \(\homotuple{\logLambda,\lthSigma[1],\lthSigma[2]}\)-maximal $\sT \sSubsetEq \lthSigma$, $\nii =1,2$, and $\lphi \in \lthSigma[\nii]$,
        \[
        \lProves[\logLambda] \lAND \sT[\nii] \limplies \lphi \iff \lphi \in \sT[\nii].
        \]
        In particular, $\lfalse \notin \sT[\nii]$ and for all $\lphi,\lpsi \in \lthSigma[\nii]$ with $ \lProves[\logLambda] \lphi \liff \lpsi$,
        \[
        \lphi \in \sT[\nii] \iff \lpsi \in \sT[\nii].
        \]
    \end{thmenumerate}
\end{lemma}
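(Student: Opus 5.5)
For part (i) I would use a Lindenbaum-style construction that exploits the essential finiteness of \(\lthSigma[1]\) and \(\lthSigma[2]\). Fix an inseparable \(\sT\). Since each \(\lthSigma[\nii]\) contains only finitely many formulas up to \(\logKReflTrans\)-equivalence, I choose finitely many representatives \(\lphi[1],\dots,\lphi[\nk]\), each tagged with the index \(\nii\) of the set it comes from. I then process these one at a time, at each stage adding either \(\lphi[\nj]\) or \(\lnot\lphi[\nj]\) to the appropriate \(\sT[\nii]\) while keeping inseparability.

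The key step is that this choice is always possible. Suppose \(\sT\) is inseparable, \(\lphi\in\lthSigma[1]\), and both \(\sT\sUnion\Set{\lphi}\) and \(\sT\sUnion\Set{\lnot\lphi}\) (with the new formula placed in the first component) are separable.
\begin{itemize}
\item There are interpolating formulas \(\lpsi,\lpsi[']\) in the common atoms with \(\lProves[\logLambda]\lAND\sT[1]\land\lphi\limplies\lpsi\) and \(\lProves[\logLambda]\lAND\sT[1]\land\lnot\lphi\limplies\lpsi[']\).
\item Both \(\lnot\lpsi\) and \(\lnot\lpsi[']\) follow from \(\lAND\sT[2]\).
\item Classically, \(\lAND\sT[1]\limplies\lpsi\lor\lpsi[']\) and \(\lAND\sT[2]\limplies\lnot(\lpsi\lor\lpsi['])\) are then valid in \(\logLambda\), so \(\sT\) is separable, a contradiction.
\end{itemize}
The case \(\lphi\in\lthSigma[2]\) is symmetric, using a conjunction instead of a disjunction. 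A formula lying in both \(\lthSigma[1]\) and \(\lthSigma[2]\) is handled once for each index.

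After finitely many steps every representative or its negation lies in the right component. An arbitrary \(\lphi\in\lthSigma[\nii]\) is equivalent to some representative \(\lphi[\nj]\), and I add \(\lphi\) or \(\lnot\lphi\) according to the choice made for \(\lphi[\nj]\). Adding formulas provably equivalent to members of \(\sT[\nii]\) does not change the conjunction up to \(\logLambda\)-equivalence. Strictly, \(\lAND\sT[\nii]\) should be read as the conjunction of a finite subset, and separability is witnessed by finite subsets. This is the bookkeeping point I expect to need the most care: I would state that \(\lAND\) ranges over finite subconjunctions, and observe that because there are finitely many equivalence classes, any such conjunction is equivalent to a finite one. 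The resulting set is then maximal.

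For part (ii), the direction \(\Leftarrow\) is trivial. For \(\Rightarrow\), suppose \(\lProves[\logLambda]\lAND\sT[\nii]\limplies\lphi\) and \(\lphi\notin\sT[\nii]\). By maximality \(\lnot\lphi\in\sT[\nii]\), so \(\lAND\sT[\nii]\) is \(\logLambda\)-inconsistent. Then \(\sT\) is separable: for \(\nii=1\) take \(\lpsi=\lfalse\), and for \(\nii=2\) take \(\lpsi=\ltrue\). Both use no atoms, so they are admissible. This contradicts inseparability.

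The particular consequences then follow directly:
\begin{itemize}
\item \(\lfalse\notin\sT[\nii]\), since otherwise \(\lAND\sT[\nii]\) is again inconsistent.
\item If \(\lphi\in\sT[\nii]\) and \(\lProves[\logLambda]\lphi\liff\lpsi\), then \(\lAND\sT[\nii]\limplies\lpsi\) is provable, so \(\lpsi\in\sT[\nii]\) by the main equivalence.
\end{itemize}
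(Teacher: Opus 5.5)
The paper gives no proof of this lemma; it quotes it from Gabbay--Maksimova. Your Lindenbaum-style argument is the standard one. Part (ii), the one-step extension for a formula that enters only one component, and your finite-subconjunction bookkeeping are all correct.

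One step, however, does not fit the paper's definitions. In the paper a candidate is a single set $T\subseteq\Sigma$ with $T_i=T\cap\Sigma_i$. So a formula $\varphi\in\Sigma_1\cap\Sigma_2$ cannot be ``placed in the first component'': adding $\varphi$ (or $\neg\varphi$) to $T$ adds it to $T_1$ and $T_2$ at once. For such $\varphi$, separability of $T\cup\{\varphi\}$ only gives $\vdash_\Lambda \bigwedge T_2\land\varphi\to\neg\psi$, not $\vdash_\Lambda \bigwedge T_2\to\neg\psi$. Then $\psi\lor\psi'$ is no longer a separator, and your key step fails. Handling common formulas ``once for each index'' means you are really extending a pair $(T_1,T_2)$ independently. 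You then still owe the check that the final maximal pair $(T_1',T_2')$ comes from a subset of $\Sigma$, i.e.\ that $T_1'$ and $T_2'$ agree on $\Sigma_1\cap\Sigma_2$.

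The missing ingredient is a hypothesis you never used: $\Sigma_1$ and $\Sigma_2$ are subformula closed. Hence every $\varphi\in\Sigma_1\cap\Sigma_2$ has only common atoms and is itself an admissible separator. With this, any of the following repairs works.
\begin{itemize}
\item In the single-set version, for common $\varphi$ take the separator $(\varphi\land\psi)\lor(\neg\varphi\land\psi')$.
\item Alternatively, apply the paper's lemma that $T_2$ may be replaced by $T_2\setminus T_1$ in a separation. This effectively removes $\varphi$ from the second side and rescues your disjunction $\psi\lor\psi'$.
\item In the pair version, observe that if a common $\varphi$ lies in one component and $\neg\varphi$ in the other, then $\varphi$ or $\neg\varphi$ separates. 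So by maximality $T_2'\cap\Sigma_1\subseteq T_1'$ and $T_1'\cap\Sigma_2\subseteq T_2'$. Hence $T'=T_1'\cup T_2'$ satisfies $T'\cap\Sigma_i=T_i'$ and is the required maximal extension of $T$.
\end{itemize}
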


From these maximal inseparable sets we construct a finite model, which we will call the \emph{Smoryński model}, after \textcite{Smorynski1978-Beths-theorem-and-self-referential-sentences} who introduced these models.

\begin{construction}[Smoryński model]
    We define the \emph{Smoryński model} \(\frmM[\lthSigma] = \structuple{\sX[\lthSigma], \relPreEquiv, \fVal}\) as follows
    \begin{itemize}
        \item
        $\sX[\lthSigma]$ consists of the \(\homotuple{\logLambda,\lthSigma[1],\lthSigma[2]}\)-maximal subsets of $\lthSigma$.
        \item
        For $\sT, \sT[']\in  \sX[\lthSigma]$,
        \[
            \sT \relPreEquiv \sT['] \iff \ForAll{\lbox\lphi \in \lthSigma} \lbox\lphi \in \sT\implies  \lbox\lphi \in \sT['].
        \]
        \item
        For every propositional variable $\lap$ in $\lthSigma$,
        \begin{equation*}
            \fVal{\lap} \coloneqq \Set{\sT \in \sX[\lthSigma]}[\lap \in \sT].
        \end{equation*}
    \end{itemize}
    Note that clearly $\relPreEquiv$ is a preorder and, by \zsubref{lemma:maximal-separable-pairs}{lemma:maximal-separable-pairs:equivalence}, $\frmM[\lthSigma]$ is finite, since $\lthSigma$ contains only finitely many formulas up to logical equivalence in $\logKReflTrans$.
\end{construction}

Smoryński models satisfy the following Truth Lemma for formulas in $\lthSigma$.

\begin{lemma}[Truth Lemma]\zlabel{lemma:truth-lemma}
    For every formula $\lphi \in \lthSigma$ and $\sT \in \sX[\lthSigma]$,
    \[
        \frmM[\lthSigma], \sT \lValidates\lphi \iff \lphi \in \sT.
    \]
\end{lemma}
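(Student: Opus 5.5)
The proof goes by induction on the construction of \(\lphi \in \lthSigma\); since \(\lthSigma[1]\) and \(\lthSigma[2]\) are subformula-closed, the induction hypothesis is available for all subformulas. The atomic case holds by the definition of \(\fVal\). The constants and the Boolean connectives are handled with \zsubref{lemma:maximal-separable-pairs}{lemma:maximal-separable-pairs:equivalence}. For example, if \(\lnot\lphi \in \lthSigma[\nii]\), then \(\lphi \in \lthSigma[\nii]\), and maximality gives \(\lphi \in \sT[\nii]\) or \(\lnot\lphi \in \sT[\nii]\), but not both, since otherwise \(\lAND\sT[\nii] \limplies \lfalse\) would be provable. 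Conjunction and disjunction follow similarly from the provable-implication characterisation in part (ii). One small point needs care: membership \(\lphi \in \sT\) does not depend on whether we read it in \(\sT[1]\) or \(\sT[2]\). If \(\lphi \in \lthSigma[1] \sIntersection \lthSigma[2]\) and \(\lphi \in \sT[1]\) but \(\lnot\lphi \in \sT[2]\), then \(\lphi\) itself would separate \(\sT\), which is impossible.

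The essential case is \(\lbox\lphi\) with \(\lbox\lphi \in \lthSigma[\nii]\). For (\(\Leftarrow\)), suppose \(\lbox\lphi \in \sT\) and \(\sT \relPreEquiv \sT[']\). Then \(\lbox\lphi \in \sT[']\) by the definition of \(\relPreEquiv\). Since \(\lProves[\logLambda] \lbox\lphi \limplies \lphi\), part (ii) gives \(\lphi \in \sT[']\), and the induction hypothesis yields \(\frmM[\lthSigma], \sT['] \lValidates \lphi\). Hence \(\frmM[\lthSigma], \sT \lValidates \lbox\lphi\).

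For (\(\Rightarrow\)), suppose \(\lbox\lphi \notin \sT\), so \(\lnot\lbox\lphi \in \sT[\nii]\). Write \(\sB[\nj] \coloneqq \Set{\lbox\lchi \in \sT[\nj]}\) for \(\nj = 1,2\). Let \(\sS \coloneqq \sB[1] \sUnion \sB[2] \sUnion \Set{\lnot\lphi}\), where \(\lnot\lphi\) lies in \(\lthSigma[\nii]\) by the closure assumptions. The main step is to show that \(\sS\) is inseparable. Once that is done, \zsubref{lemma:maximal-separable-pairs}{lemma:maximal-separable-pairs:extension} extends \(\sS\) to a maximal \(\sT[']\). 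This \(\sT[']\) contains every boxed formula of \(\sT\), so \(\sT \relPreEquiv \sT[']\). It also contains \(\lnot\lphi\), so by the induction hypothesis \(\sT[']\) refutes \(\lphi\), and therefore \(\sT\) refutes \(\lbox\lphi\).

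I expect the inseparability of \(\sS\) to be the main obstacle. Take \(\nii = 1\); the case \(\nii = 2\) is symmetric, swapping the roles and the sign of the separating formula. Suppose \(\sS\) is separable. By \zcref{rmk:disjointness-for-separability} there is a formula \(\lpsi\) in the common atoms with \(\lAND\sS[1] \limplies \lpsi\) and \(\lAND\paren{\sS[2] \sMinus \sS[1]} \limplies \lnot\lpsi\) provable in \(\logLambda\). Since \(\sS[2] \sMinus \sS[1] \sSubsetEq \sB[2]\), the second gives \(\lAND\sB[2] \limplies \lnot\lpsi\). The first gives \(\lAND\sB[1] \limplies \paren{\lnot\lpsi \limplies \lphi}\). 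Boxed formulas are equivalent in \(\logKReflTrans\) to their own boxes by axiom 4, so necessitation and \(\mathbf{K}\) give
\[
\lProves[\logLambda] \lAND\sB[1] \limplies \paren{\lbox\lnot\lpsi \limplies \lbox\lphi}
\quad\text{and}\quad
\lProves[\logLambda] \lAND\sB[2] \limplies \lbox\lnot\lpsi .
\]
Since \(\lnot\lbox\lphi \in \sT[1]\), the first yields \(\lAND\sT[1] \limplies \lDiamond\lpsi\), and the second yields \(\lAND\sT[2] \limplies \lnot\lDiamond\lpsi\). The formula \(\lDiamond\lpsi\) uses only common atomic propositions, so it separates \(\sT\). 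This contradicts the maximality of \(\sT\), so \(\sS\) is inseparable and the box case, and with it the lemma, is complete.
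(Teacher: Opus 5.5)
Your proof is correct and takes essentially the same route as the paper. You argue by induction, and in the box case you show that the boxed formulas of \(\sT\) together with \(\lnot\lphi\) are inseparable, using \zcref{rmk:disjointness-for-separability}, necessitation with axiom~4, and the separating formula \(\lDiamond\lpsi\) (the paper's \(\lnot\lbox\lnot\ldelta\)); your remark about reading membership in \(\sT[1]\) versus \(\sT[2]\) is harmless but unnecessary, since both are intersections of the single set \(\sT\).
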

\begin{proof}
    We proceed by induction on the formula $\lphi$.
    The case for atomic propositions holds by definition and the steps for the Boolean connectives are routine.

    So assume $\lphi = \lbox\lpsi$. Suppose that $\lbox\lpsi \in \sT$ and let $\sT['] \in \sX$ with $\sT \relPreEquiv \sT[']$. Then $\lbox\lpsi \in \sT[']$ and, since $\lbox \lpsi \limplies \lpsi$ holds in $\logKReflTrans$, it follows from \zsubref{lemma:maximal-separable-pairs}{lemma:maximal-separable-pairs:equivalence} that $\lpsi \in \sT[']$. Hence, by the induction hypothesis, $\frmM[\lthSigma], \sT['] \lValidates\lpsi$. Therefore, it follows that $\frmM[\lthSigma], \sT \lValidates\lbox\lpsi$.

    Conversely, suppose that $\lbox\lpsi \notin \sT$.
    Then $\lnot\lbox\lpsi \in \sT$ and without loss of generality $\lnot\lbox\lpsi \in \lthSigma[1]$. Let $\lbox \lalpha[1], \dots, \lbox\lalpha[\nn]$ be the boxed formulas in $\sT[1]$ and $\lbox \lbeta[1],\dots, \lbox\lbeta[\nm]$ be the boxed formulas in $\sT[2]$.
    It suffices to prove that the set $\Set{\lbox \lalpha[1],\dots, \lbox\lalpha[\nn], \lbox \lbeta[1], \dots, \lbox\lbeta[\nm], \lnot\lpsi}$ is inseparable. Then, by \zcref{lemma:maximal-separable-pairs}, it extends to a \(\homotuple{\logLambda,\lthSigma[1],\lthSigma[2]}\)-maximal $\sT['] \sSubsetEq\lthSigma$ with $\sT \relPreEquiv \sT[']$ and $\frmM[\lthSigma], \sT['] \lValidates \lnot\lpsi$, by the induction hypothesis, yielding $\frmM[\lthSigma], \sT \lValidates \lnot\lbox\lpsi$.

    So assume for a contradiction that this set is separable. Then, by \zcref{rmk:disjointness-for-separability}, there exists a formula $\ldelta$ with atomic propositions in $\lthSigma[1] \sIntersection \lthSigma[2]$ such that
    \begin{equation*}
        \lProves[\logLambda] \paren{ \lnot\lpsi \land \lAND_{\nii=1}^{\nn}\lbox\lalpha[\nii] } \limplies \ldelta
        \quad \textrm{(a)} \mand
        \lProves[\logLambda] \lAND_{\nj=1}^{\nm} \lbox\lbeta[\nj] \limplies \lnot\ldelta
        \quad \textrm{(b)} .
    \end{equation*}
    Applying, first, necessitation and the K-axiom to (b), and, second, the equivalence of \(\lBox[pow=2]\) and \(\lBox\) in \(\logKReflTrans\), yields
    \[
        \lProves[\logLambda] \paren{ \lAND_{\nj=1}^{\nm} \lBox[pow=2]\lbeta[\nj] } \limplies \lbox\lnot\ldelta
        \mand
        \lProves[\logLambda] \paren{ \lAND_{\nj=1}^{\nm} \lBox\lbeta[\nj] } \limplies \lbox\lnot\ldelta .
    \]
    Applying partial contraposition to (a), then the same reasoning as for (b), and then partial contraposition again, we obtain
    \begin{gather*}
        \lProves[\logLambda] \paren{ \lnot\ldelta \land \lAND_{\nii=1}^{\nn}\lbox\lalpha[\nii] } \limplies \lpsi, \quad
        \lProves[\logLambda] \paren{ \lbox\lnot\ldelta \land \lAND_{\nii=1}^{\nn}\lbox\lalpha[\nii] } \limplies \lbox\lpsi, \quad\text{and}\\
        \lProves[\logLambda] \paren{ \lnot\lbox\lpsi \land \lAND_{\nii=1}^{\nn}\lbox\lalpha[\nii] } \limplies \lnot\lbox\lnot\ldelta .
    \end{gather*}
    However, since $\lnot\lbox\lpsi \in \sT[1]$, this gives a separation of \(\sT\), a contradiction.
\end{proof}

If $\logLambda$ extends $\logKReflTransConfl$, then \(\frmM[\lthSigma]\)  is confluent and hence an $\logKReflTransConfl$-model.

\begin{lemma}[cf.\@ {\cite[\citepage{146}]{GM2005}}]\zlabel{lemma:Henkin-confluence}
    If $\logLambda$ is a normal extension of \(\logKReflTransConfl\), then the underlying frame of \(\frmM[\lthSigma]\) is confluent.
\end{lemma}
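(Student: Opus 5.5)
The plan is to verify confluence directly by a separation argument, in the style of the box case of \zcref{lemma:truth-lemma}. Let $\sT \relPreEquiv \sT[1]$ and $\sT \relPreEquiv \sT[2]$ be points of $\frmM[\lthSigma]$. I want a maximal $\sT[']$ containing every boxed formula of $\sT[1]$ and every boxed formula of $\sT[2]$; then $\sT[1] \relPreEquiv \sT[']$ and $\sT[2] \relPreEquiv \sT[']$ hold by definition of $\relPreEquiv$. By \zsubref{lemma:maximal-separable-pairs}{lemma:maximal-separable-pairs:extension}, it suffices to show that
\[
\sS \coloneqq \Set{\lbox\lphi}[\lbox\lphi \in \sT[1] \sUnion \sT[2]]
\]
is \(\homotuple{\logLambda,\lthSigma[1],\lthSigma[2]}\)-inseparable.

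Since $\sT[1]$ and $\sT[2]$ are finite up to equivalence, I abbreviate their boxed formulas by conjunctions:
\begin{itemize}
\item $\lbox\lalpha[1]$ for the boxed formulas of $\sT[1]$ in $\lthSigma[1]$, and $\lbox\lalpha[2]$ for those in $\lthSigma[2]$;
\item $\lbox\lbeta[1]$ for the boxed formulas of $\sT[2]$ in $\lthSigma[1]$, and $\lbox\lbeta[2]$ for those in $\lthSigma[2]$.
\end{itemize}
This uses that $\lBox$ commutes with finite conjunctions. Suppose $\sS$ is separated by $\ldelta$, a formula with atomic propositions in the common language. That is,
\[
\lProves[\logLambda] \lbox\lalpha[1]\land\lbox\lbeta[1]\limplies\ldelta
\mand
\lProves[\logLambda] \lbox\lalpha[2]\land\lbox\lbeta[2]\limplies\lnot\ldelta .
\]
Necessitation, the K-axiom and $\lBox[pow=2] = \lBox$ upgrade these to
\[
\lProves[\logLambda] \lbox\lalpha[1]\land\lbox\lbeta[1]\limplies\lbox\ldelta
\mand
\lProves[\logLambda] \lbox\lalpha[2]\land\lbox\lbeta[2]\limplies\lbox\lnot\ldelta .
\]

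The key step, which I expect to be the main obstacle, is to push this separation down to $\sT$ using the $\logKReflTransConfl$ axiom $\lDiamond\lBox\lap\limplies\lBox\lDiamond\lap$. Equivalently, $\lnot(\lDiamond\lBox\ldelta\land\lDiamond\lBox\lnot\ldelta)$ is a theorem of $\logLambda$.

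The idea is to find common-language formulas that are forced on each side.
\begin{itemize}
\item $\sT[1]$ should force something like $\lBox\lbeta[1]\limplies\lBox\ldelta$ on its $\lthSigma[1]$-side and $\lBox\lbeta[2]\limplies\lBox\lnot\ldelta$ on its $\lthSigma[2]$-side.
\item $\sT[2]$ should do the same with the roles of the $\lalpha$'s and $\lbeta$'s exchanged.
\end{itemize}
From this I would derive the separators. For $\sT[1]$ I aim to show it is separated by $\lDiamond\lBox\ldelta$ (or its negation) unless $\lBox\lbeta[1]$, read appropriately, is consistent with it, and similarly for $\sT[2]$. Finally, I would lift to $\sT$, which sees both, via the combined formula
\[
\lDiamond\lBox\ldelta\land\lDiamond\lBox\lnot\ldelta .
\]

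Concretely, I would first try the following. I suspect the existence of $\sT[1]$ above $\sT$ should give
\[
\lProves[\logLambda] \lAND\sT[1]\limplies\lBox\ldelta\lor\lnot\lBox\lbeta[1]\ \text{-type statements}
\]
on each language side. I would then use inseparability of $\sT[1]$, $\sT[2]$ and $\sT$ in turn, each time with a formula built from $\ldelta$ by $\lBox$, $\lDiamond$ and Booleans, so that it stays in the common language.

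If this direct derivation becomes unwieldy, my fallback exploits finiteness of $\frmM[\lthSigma]$. First, I would replace $\sT[1]$ and $\sT[2]$ by points in final clusters above them; this is harmless since $\relPreEquiv$ is transitive. I would then show that two final-cluster points above a common $\sT$ have the same boxed formulas. To prove this, I would suppose $\lbox\lphi\in\sT[1]$ but $\lbox\lphi\notin\sT[2]$ and separate $\sT$ using a formula of the form $\lDiamond\lBox\lchi\land\lDiamond\lnot\lchi$, obtained as in the Truth Lemma proof via \zcref{rmk:disjointness-for-separability}. The $\logKReflTransConfl$ axiom then refutes this formula in $\logLambda$.
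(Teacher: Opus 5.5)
You write $T_1,T_2$ for the two successors of $T$; below I call them $U,V$, so that $T_i$ can mean $T\cap\Sigma_i$ as in the paper. Your main route has a genuine gap. The setup is right: inseparability of the boxed formulas of $U$ and $V$, a separator $\delta$, and the upgrade to $\lbox\delta$ and $\lbox\lnot\delta$. But it stops exactly at the step that needs an idea, and the mechanism you suggest cannot work. Statements at $U$ such as $\bigwedge(U\cap\Sigma_1)\limplies(\lbox\beta_1\limplies\lbox\delta)$ mix the two languages. Inseparability of $U$ or $V$ gives no leverage either, because neither contains both $\lbox\alpha_1$ and $\lbox\beta_1$. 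Nor can you put $\ldiamond\lbox\delta$ into $T$ via the Truth Lemma, since $\delta$ need not lie in $\Sigma$. The missing step is to argue at $T$ directly. For every boxed $\lbox\chi\in\Sigma_i$ belonging to $U$ or $V$, $T$ sees a point containing it. By the Truth Lemma and closure of $\Sigma_i$ under $\lnot$ and $\lbox$, this gives $\lnot\lbox\lnot\lbox\chi\in T_i$. So $\bigwedge T_1$ proves $\ldiamond\lbox\chi$ for each such side-1 formula, and these are witnessed by \emph{different} points. Merging them requires the $\logKReflTransConfl$-theorem $(\ldiamond\lbox p\land\ldiamond\lbox q)\limplies\ldiamond\lbox(p\land q)$, which yields $\bigwedge T_1\limplies\ldiamond(\lbox\alpha_1\land\lbox\beta_1)$, hence $\bigwedge T_1\limplies\ldiamond\lbox\delta$. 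Symmetrically $\bigwedge T_2\limplies\ldiamond\lbox\lnot\delta$. Only then does the form $\ldiamond\lbox\lnot\delta\limplies\lnot\ldiamond\lbox\delta$ of the axiom that you quote make $\ldiamond\lbox\delta$ a separator of $T$. This is the paper's proof. The directedness law, applied at $T$, is the use of confluence your sketch never makes.

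Your fallback starts with a sound reduction: the model is finite, so it suffices that any two points $U,V$ in final clusters above $T$ share their boxed formulas. Its last step, however, is false as stated. The formula $\ldiamond\lbox\chi\land\ldiamond\lnot\chi$ is not refuted by $\logKReflTransConfl$: it holds at the root $a$ of the two-element chain $a<b$, a confluent preorder, with $\chi$ true only at $b$. Correspondingly, $\lbox\varphi\in U\setminus V$ by itself only gives $T\models\ldiamond\lbox\varphi\land\ldiamond\lnot\lbox\varphi$, which is consistent. The ingredient you mention but never use is finality. Every successor of $V$ sees $V$ back, so $V\models\lbox\lnot\lbox\varphi$. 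The Truth Lemma then puts both $\lnot\lbox\lnot\lbox\varphi$ and $\lnot\lbox\lnot\lbox\lnot\lbox\varphi$ into $T_i$; both lie in $\Sigma_i$ by closure. Modulo $\lbox\lbox\varphi\leftrightarrow\lbox\varphi$, their conjunction is an instance of $\ldiamond\lbox p\land\ldiamond\lbox\lnot p$ with $p=\lbox\varphi$. So $\bigwedge T_i$ is $\Lambda$-inconsistent, and $T$ is separated by $\bot$ or $\top$; no separating formula or disjointness lemma is needed. With this repair the fallback becomes a valid, shorter, semantic alternative to the paper's argument. It relies on finiteness of the Smory\'nski model to supply final clusters, which the paper's syntactic separation argument does not need.
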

\begin{proof}
    Let $\sT, \sU, \sV \in \sX[\lthSigma]$ with $\sT \relPreEquiv \sU$ and $\sT \relPreEquiv\sV$. Let $\sU['] = \Set{\lbox\lphi}[\lbox\lphi\in \sU]$ and $\sV['] = \Set{\lbox\lphi}[\lbox\lphi\in \sV]$. Moreover, for $\nii =1,2$, write $\sU[',\nii] = \sU[']\sIntersection\lthSigma[\nii]$ and $\sV[',\nii] = \sV[']\sIntersection\lthSigma[\nii]$. It suffices to show that $\sU[']\sUnion \sV[']$ is inseparable, since then, by \zcref{lemma:maximal-separable-pairs}, it extends to a \(\homotuple{\logLambda,\lthSigma[1],\lthSigma[2]}\)-maximal $\sS \sSubsetEq\lthSigma$ with $\sU \relPreEquiv \sS$ and $\sV \relPreEquiv \sS$.

    So assume for a contradiction that  $\sU[']\sUnion \sV[']$ is separable. Then there exists a formula $\lpsi$ with atomic propositions in $\lthSigma[1]\sIntersection \lthSigma[2]$ such that
    \begin{equation*}
        \lProves[\logLambda] \lAND(\sU[',1]\sUnion \sV[',1])  \limplies \lpsi \mand
        \lProves[\logLambda] \lAND(\sU[',2]\sUnion \sV[',2])  \limplies \lnot\lpsi .
    \end{equation*}
    Thus, by applying necessitation, the K-axiom, and the equivalence of \(\lBox[pow=2]\) and \(\lBox\) in \(\logKReflTrans\),
    \begin{equation*}
        \lProves[\logLambda] \lAND(\sU[',1]\sUnion \sV[',1])  \limplies \lbox\lpsi \mand
        \lProves[\logLambda] \lAND(\sU[',2]\sUnion \sV[',2])  \limplies \lbox\lnot\lpsi .
    \end{equation*}
    Now, by \zcref{lemma:truth-lemma}, since  $\sT\relPreEquiv\sU$ and $\sT\relPreEquiv\sV$, for each $\lbox\lphi \in \sU[\nii] \sUnion \sV[\nii]$, we get $\lbox\lnot\lbox \lphi\notin \sT[\nii]$, yielding $\lDiamond\lBox \lphi \in \sT[\nii]$, since $\lthSigma[\nii]$ is closed under boxes. Hence, since $\lProves[\logKReflTransConfl] (\ldiamond\lbox\lap \land \ldiamond\lbox \laq) \limplies \ldiamond\lbox(\lap \land \laq)$, it follows that
    \begin{equation*}
        \lProves[\logLambda] \lAND\sT[1]  \limplies \ldiamond\lbox\lpsi \mand
        \lProves[\logLambda] \lAND\sT[2] \limplies \ldiamond\lbox\lnot\lpsi .
    \end{equation*}
    Finally, since $\lProves[\logKReflTransConfl] \ldiamond \lbox \lnot \lpsi \limplies \lnot \ldiamond\lbox \lpsi$, it follows that $\lProves[\logLambda]\lAND\sT[2] \limplies \lnot\ldiamond\lbox\lpsi$. So $\sT$ is separable, a contradiction.
\end{proof}

If for $\lphi\in \lthSigma[1]$ and $\lpsi\in \lthSigma[2]$, the implication $\lphi \limplies \lpsi \in \logLambda$ has no interpolant, then $\Set{\lphi,\lnot\lpsi}$ is \(\homotuple{\logLambda,\lthSigma[1],\lthSigma[2]}\)-inseparable and, by \zcref{lemma:maximal-separable-pairs} it can be extended to a \(\homotuple{\logLambda,\lthSigma[1],\lthSigma[2]}\)-maximal subset. Hence, by the Truth Lemma, there is a point in $\frmM[\lthSigma]$ which does not satisfy $\lphi \limplies \lpsi$.

Therefore, to prove the Craig interpolation property for the logics $\Gamma(\logLambda,\nm,\nn)$ with  $\nn,\nm \in \Set{1,2,\omega}$ and $\logLambda \in \Set{\logInt,\logKC}$ we will assume that an implication $\lphi \limplies \lpsi$ does not have an interpolant and then construct suitable sets $\lthSigma[1]$ and $\lthSigma[2]$ to obtain a Smoryński model that falsifies $\lphi \limplies \lpsi$.

However, the model $\frmM[\lthSigma]$ might not yet be a model for $\Gamma(\logLambda,\nm,\nn)$ if $\nm < \omega$ or $\nn<\omega$, since in general it could contain clusters that are too large. In the next section, we show how to salvage this problem.
 
    \section{Refining Clusters}\label{sec:refine-clusters}
    In this section we establish a condition that allows us to decrease the size of cluster in models. This will allow us to refine the suitably chosen Smoryński model to a model of our given logic and prove the Craig interpolation property.

\begin{definition}
     Let \(\lthSigma\) be a set of formulas, \(\frmM = \structuple{\sX, \relPre, \fVal}\) a preorder model, and $\sC\subseteq \sX$ a cluster in $\frmM$.
     A doubleton $\Set{\vx,\vy}\subseteq \sC$ is called \emph{\(\lthSigma\)-adequate} if for all $\lbox\lphi \in \lthSigma$ with $\vx \lValidates \lnot\lbox\lphi\land\lphi$ and $\vy \lValidates \lnot\lbox\lphi\land\lphi$, there exists $\vz\in \sX$ with $\vx\relPre[strict] \vz$ such that $\vz \lValidates \lnot\lphi$.
 \end{definition}

The notion of a $\lthSigma$-adequate doubleton in a cluster will be crucial in the proof of the main lemma. It exactly captures the property needed to reduce the size of a cluster by removing edges while still preserving the validity of the formulas in $\lthSigma$.

\begin{figure}
    \centering
    \begin{tikzpicture}[
    fcirc/.style={circle,draw=black, minimum size = 4.5pt, inner sep = 0pt},
    square/.style={regular polygon,regular polygon sides=4},
    fsqu/.style={square,draw=black, minimum size = 5.5pt, inner sep = 0pt},
    fheart/.style={shape=heart,scale=0.23,draw=black, minimum size = 4.5pt, inner sep = 8pt}
    ]

\node (north) at (0.4,0.01) {};
    \node (east) at (1.91,0) {};
    \node (south) at (0.4,-0.01) {};
    \node (west) at (-0.01,0) {};

    \node[draw, rounded corners = 8pt,inner sep=1.6mm,fit= (north) (south) (east) (west)] {};

    \node[fheart] (0) at (0,0) {};
    \node[fheart] (1) at (0.4,0) {};
    \node[fcirc] (2) at (0.8,0) {};
    \node[fcirc] (3) at (1.2,0) {};
    \node[fcirc] (n) at (1.9,0) {};
    \node[] (dots) at (1.55,0) {${\dots}$};
    \draw[->] (0.95,0.3) -- (0.95,0.75);

    \draw[rounded corners=8pt]
     (-1,1.55) -- (-0.5,0.8) -- (2.5,0.8) -- (3,1.55);

\draw[dashed,->] (3.4,0.5) to[out=45,in=135] (5.6,0.5);

\node (north2) at (0.95+7,0.01) {};
    \node (east2) at (1.16+7,0) {};
    \node (south2) at (0.95+7,-0.01) {};
    \node (west2) at (0.74+7,0) {};

    \node[draw, rounded corners = 8pt,inner sep=1.6mm,fit= (north2) (south2) (east2) (west2)] {};

    \node[fheart] (02) at (0.75+7,0) {};
    \node[fheart] (12) at (1.15+7,0) {};
    \node[fcirc] (22) at (0.4+7,-0.9) {};
    \node[fcirc] (32) at (0.95+7,-0.9) {};
    \node[fcirc] (n2) at (1.65+7,-0.9) {};
    \node[] (dots) at (1.3+7,-0.9) {${\dots}$};
    \draw[->] (0.95+7,0.3) -- (0.95+7,0.75);
    \draw[->] (22) -- (0.8+7,-0.325);
    \draw[->] (32) -- (0.95+7,-0.325);
    \draw[->] (n2) -- (1.1+7,-0.325);

    \draw[rounded corners=8pt]
     (-1+7,1.55) -- (-0.5+7,0.8) -- (2.5+7,0.8) -- (3+7,1.55);
    \end{tikzpicture}
    \caption{Refining clusters}
    \label{fig:cluster-refinement}
\end{figure}
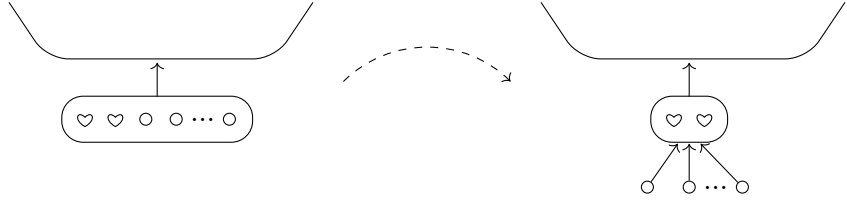

\begin{lemma}\zlabel{lemma:adequate-pairs}
    Let $\lthSigma$ be a Boolean and subformula closed set of formulas, \(\frmM = \structuple{\sX, \relPre, \fVal}\) a preorder model, and $\sC$ a cluster in $\frmM$.
    If $\Set{\vx[1], \vx[2]} \sSubsetEq \sC$ is a $\lthSigma$-adequate doubleton, then the relation
    \[
        {\relPre[']} \coloneqq {\relPre} \sMinus \Set{\homotuple{\vy,\vz}\in \sC[pow=2]}[\vy\neq \vz \text{ and } \vz\notin \Set{\vx[1],\vx[2]}]
    \]
    is a preorder and for \(\frmM['] \coloneqq \structuple{\sX, \relPre['], \fVal}\), we have \(\sem{\lphi}[\frmM] = \sem{\lphi}[\frmM[']]\) for each \(\lphi \in \lthSigma\).

    Moreover, if $\relPre$ is confluent, then so is $\relPre[']$.
\end{lemma}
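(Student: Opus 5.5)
The plan is to verify the three claims (preorder, preservation of truth for \(\lthSigma\), preservation of confluence) directly from the shape of \(\relPre[']\). The relation keeps every pair outside \(\sC[pow=2]\) and every reflexive pair. Inside \(\sC\), a point \(\vy\) keeps only its edges to itself and to \(\vx[1],\vx[2]\). Two facts do most of the work: \({\relPre[']} \sSubsetEq {\relPre}\), and every point of \(\sC\) still \(\relPre[']\)-sees \(\vx[1]\), \(\vx[2]\) and everything it saw outside \(\sC\).

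\emph{Preorder.} Reflexivity is clear since we never remove pairs \(\homotuple{\vy,\vy}\). For transitivity, let \(\va \relPre['] \vb \relPre['] \vc\). Then \(\va \relPre \vc\). This pair is removed only if \(\va,\vc \in \sC\), \(\va \neq \vc\) and \(\vc \notin \Set{\vx[1],\vx[2]}\). In that case \(\va \relPre \vb \relPre \vc\) forces \(\vb \in \sC\) by convexity of clusters. Since \(\vb \relPre['] \vc\) with \(\vc \notin \Set{\vx[1],\vx[2]}\), we get \(\vb = \vc\). But then \(\va \relPre['] \vc\) holds directly, a contradiction.

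\emph{Truth preservation.} I will argue by induction on \(\lphi \in \lthSigma\), using subformula closure. Atoms and Boolean connectives are immediate, so the only case is \(\lBox\lphi\). Since \({\relPre[']} \sSubsetEq {\relPre}\), if \(\lBox\lphi\) holds at a point in \(\frmM\), it holds there in \(\frmM[']\) (using the induction hypothesis for \(\lphi\)).

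For the converse, suppose \(\vy\) refutes \(\lBox\lphi\) in \(\frmM\) via some \(\vw \sSupersetEq\)-witness, i.e.\ \(\vy \relPre \vw\) and \(\vw \lValidates \lnot\lphi\). If \(\vy \relPre['] \vw\) we are done. Otherwise \(\vy,\vw \in \sC\). In that case I look for a \(\relPre[']\)-successor of \(\vy\) refuting \(\lphi\), in three steps:
\begin{itemize}
\item If \(\vx[1]\) or \(\vx[2]\) refutes \(\lphi\), it works as a witness, since \(\vy \relPre['] \vx[\nii]\).
\item Otherwise both \(\vx[1],\vx[2]\) satisfy \(\lphi\). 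Since they lie in \(\sC\), they \(\relPre\)-see \(\vw\), so both satisfy \(\lnot\lBox\lphi \land \lphi\) in \(\frmM\).
\item By \(\lthSigma\)-adequacy there is \(\vz\) with \(\vx[1] \relPre[strict] \vz\) and \(\vz \lValidates \lnot\lphi\). Such a \(\vz\) lies outside \(\sC\) and \(\vy \relPre \vz\); this pair is not in \(\sC[pow=2]\), so \(\vy \relPre['] \vz\).
\end{itemize}
The induction hypothesis then transfers \(\lnot\lphi\) at the witness to \(\frmM[']\).

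This box case is the main obstacle. The delicate point is to apply adequacy to truth values in \(\frmM\), where the definition is stated, and only afterwards invoke the induction hypothesis. It is exactly the case the adequacy definition was designed for.

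\emph{Confluence.} Let \(\va \relPre['] \vb\) and \(\va \relPre['] \vc\), and take \(\vw\) with \(\vb,\vc \relPre \vw\). If \(\vw \notin \sC\), then \(\vb \relPre['] \vw\) and \(\vc \relPre['] \vw\), since those pairs are not in \(\sC[pow=2]\). If \(\vw \in \sC\), replace \(\vw\) by \(\vx[1]\). We have \(\vb \relPre \vw \relPre \vx[1]\), and the pair \(\homotuple{\vb,\vx[1]}\) is never removed, whether \(\vb \in \sC\) (target \(\vx[1]\)) or \(\vb \notin \sC\) (pair outside \(\sC[pow=2]\)). The same holds for \(\vc\), so \(\vx[1]\) is a common \(\relPre[']\)-successor.
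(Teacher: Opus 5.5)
Your proof is correct and follows essentially the same route as the paper. The forward box direction uses $\preceq'\subseteq\preceq$. The backward direction uses that every point of $C$ still $\preceq'$-sees $x_1,x_2$, with adequacy (applied to truth in the original model) supplying a witness strictly above $C$ whose pair with the starting point is left untouched. Your explicit checks of transitivity (via convexity of clusters) and of confluence (redirecting a common successor inside $C$ to $x_1$) just fill in what the paper calls straightforward.
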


\begin{proof}
     Note that for $\vy,\vz \in \sC$ we have $\vy \relPre[']\vz$  iff $\vy = \vz$ or $\vz \in \Set{\vx[1],\vx[2]}$, see \zcref{fig:cluster-refinement}.
     It is straightforward to see that $\relPre[']$ is a preorder and that if \(\relPre\) is confluent, then so is \(\relPre[']\).

     We prove by induction on the formulas $\lphi \in \lthSigma$ that for all $\vy \in \sX$,
     \[
     \frmM,\vy \lValidates \lphi \iff \frmM['],\vy \lValidates \lphi.
     \]
     The case for atomic propositions is immediate, and the steps for the Boolean connectives are routine.
     So assume $\lphi = \lbox\lpsi$.

     Suppose that $\frmM,\vy \lValidates \lbox\lpsi$.
     Then for all $\vz \in \sX$ with $\vy \relPre \vz$, we have $\frmM,\vz \lValidates \lpsi$ and, by the inductive hypothesis, $\frmM['],\vz \lValidates \lpsi$.
     Now, since ${\relPre[']} \sSubsetEq {\relPre}$, it follows that also for all $\vz \in \sX$ with $\vy \relPre['] \vz$, $\frmM['],\vz \lValidates \lpsi$.
     Hence, $\frmM['],\vy \lValidates \lbox\lpsi$.

     Conversely, suppose that $\frmM,\vy \lValidates \lnot\lbox\lpsi$.
     Then there exists $\vz \in \sX$ with $\vy \relPre \vz$ and $\frmM,\vz \lValidates \lnot\lpsi$. If $\vy \notin \sC$, then, by definition, $\vy \relPre['] \vz$ and, by the inductive hypothesis, $\frmM['],\vz \lValidates \lnot\lpsi$, yielding $\frmM['],\vy \lValidates \lnot\lbox\lpsi$.
     Otherwise, $\vy \in \sC$, and thus $\frmM,\vx[\nii] \lValidates \lnot\lbox\lpsi$ and $\vy \relPre['] \vx[\nii]$ for $\nii \in \Set{1,2}$.
     Since $\Set{\vx[1],\vx[2]}$ is $\lthSigma$-adequate, either $\frmM,\vx[\nii] \lValidates \lnot\lpsi$ for some $\nii \in \Set{1,2}$, or there exists $\vz \in \sX$ with $\vx[\nii] \relPre[strict] \vz$ and $\frmM,\vz \lValidates \lnot\lpsi$.
     In the first case it follows, by the inductive hypothesis, that $\frmM['],\vx[\nii] \lValidates \lnot\lpsi$ and in the second case it follows that $\frmM['],\vz \lValidates \lnot\lpsi$.
     Hence $\frmM['],\vy \lValidates \lnot\lbox\lpsi$.
\end{proof}

Next we prove two straightforward lemmata about \enquote{patterns} in a model that provide sufficient conditions to falsify the relevant (substitutions of) frame formulas.
This enables us to establish the existence of $\lthSigma$-adequate pairs.
Recall that $\lCluFr[\nn] = \lFrame{\frCluster[\nn+1]}$ and $\lCluFr[withtop,\nn] = \lFrame{\frCluster[withtop,\nn+1]}$ for $\nn \in \Naturals$.

\begin{lemma}\zlabel{lemma:pattern-2-cluster}
    Let \(\frmM\) be a model on a preorder \(\frF = \structuple{\sX, \relPre}\), \(\sC\) a cluster in \(\frF\), \(\vx[1], \vx[2] \in \sC\) and \(\lphi\) a formula such that \(\vx[1] \lValidates \lphi\) and \(\vx[2] \lValidates \lnot\lphi\).
    If \(\sC\) is final, then
    \begin{equation*}
        \vx[1] \lNotValidates \lCluFr[1,subst={\lphi, \lnot\lphi}]
        .
    \end{equation*}
If \(\sC\) is non-final and \(\vx[1] \relPre[strict] \vy\) implies \(\vy \lValidates \lphi\) for all \(\vy\), then
\begin{equation*}
        \vx[1] \lNotValidates \lCluFr[withtop,1,subst={\lnot\lBox\lphi \land \lphi, \lnot\lBox\lphi \land \lnot\lphi, \lBox\lphi}] .
    \end{equation*}
\end{lemma}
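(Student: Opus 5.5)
We will derive both claims from \zcref{lemma:p-morphism-fine-formula}. In each case we exhibit a p-morphism \(\ff\) from \(\frF[gen subfr=\vx[1]]\) onto the relevant cluster frame whose fibres are exactly the truth sets of the substituted formulas, restricted to the generated submodel. Recall that \(\lCluFr[1] = \lFrame{\frCluster[2]}\) and \(\lCluFr[withtop,1] = \lFrame{\frCluster[withtop,2]}\).

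\emph{Final case.} Since \(\sC\) is final, the generated subframe \(\frF[gen subfr=\vx[1]]\) is exactly \(\sC\), on which \(\relPre\) is total. Define \(\ff(\vy) = 0\) if \(\vy \lValidates \lphi\) and \(\ff(\vy) = 1\) otherwise. Then \(\ff[preimage]{0} = \sem{\lphi}\) and \(\ff[preimage]{1} = \sem{\lnot\lphi}\) within \(\sC\).
\begin{itemize}
\item The forth condition is trivial, because \(\frCluster[2]\) is a cluster.
\item For the back condition, given \(\vy \in \sC\) and \(\nj \in \Set{0,1}\), the point \(\vx[1]\) (for \(\nj=0\)) or \(\vx[2]\) (for \(\nj=1\)) lies in \(\sC\) above \(\vy\) with the right value.
\end{itemize}
Surjectivity follows similarly, and \(\vx[1] \lValidates \lphi\) is the required \(\lphi[0]\)-condition. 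The lemma then gives \(\vx[1] \lNotValidates \lCluFr[1,subst={\lphi,\lnot\lphi}]\).

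\emph{Non-final case.} Take \(\ff(\vy) = 0\) if \(\vy \lValidates \lnot\lBox\lphi\land\lphi\), \(\ff(\vy)=1\) if \(\vy \lValidates \lnot\lBox\lphi\land\lnot\lphi\), and \(\ff(\vy) = 2\) if \(\vy\lValidates\lBox\lphi\). These three sets partition every model, so \(\ff\) is well defined.

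First we locate the points. Every \(\vy \in \sC\) sees \(\vx[2]\), so \(\vy \lValidates \lnot\lBox\lphi\); hence \(\sC\) maps into \(\Set{0,1}\), with \(\ff(\vx[1])=0\) and \(\ff(\vx[2])=1\). Every \(\vy\) strictly above \(\vx[1]\) satisfies \(\lphi\) by hypothesis, and so does everything above such \(\vy\). Hence every point strictly above \(\sC\) satisfies \(\lBox\lphi\) and maps to \(2\). Non-finality of \(\sC\) guarantees such points exist, which gives surjectivity onto \(2\).

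Next we check the p-morphism conditions.
\begin{itemize}
\item Forth: points in \(\sC\) map into the bottom cluster \(\Set{0,1}\), which sees everything. Points above \(\sC\) map to \(2\), and only see points above \(\sC\), which also map to \(2\).
\item Back: for \(\vy\in\sC\), the preimages of \(0\) and \(1\) are supplied by \(\vx[1]\) and \(\vx[2]\), and the preimage of \(2\) by any point strictly above \(\sC\). For \(\vy\) strictly above \(\sC\), only \(2\) is required, and \(\vy\) itself works.
\end{itemize}
Applying \zcref{lemma:p-morphism-fine-formula} finishes the proof.

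The only delicate point is this non-final case: we must confirm that points strictly above \(\sC\) satisfy \(\lBox\lphi\) (transitivity together with the hypothesis), and that no point of \(\sC\) satisfies \(\lBox\lphi\) (it sees \(\vx[2]\)). The rest is routine.
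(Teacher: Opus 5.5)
Your proposal is correct and follows the paper's proof. Both use the same map $f$, sending a point to $0$, $1$ or $2$ according as it satisfies $\lnot\lbox\varphi\land\varphi$, $\lnot\lbox\varphi\land\lnot\varphi$ or $\lbox\varphi$; both check that $f$ is a surjective p-morphism onto the cluster-with-top frame and then apply the frame-formula lemma. You additionally write out the final case and the verifications the paper leaves implicit: every point of the cluster sees $x_2$, and points strictly above $x_1$ satisfy $\lbox\varphi$ by transitivity.
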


\begin{proof}
    We only consider the case where $\sC$ is non-final.
    The other case is very similar.
    We consider the submodel $\frmM[{gen subfr=\vx}]$ of $\frmM$ generated by any element \(\vx \in \sC\), and define the map $\ff$ from $\frF[{gen subfr=\vx}]$ to $\frCluster[2,withtop]$ by
    \[
    \ff{\vy} \coloneqq \begin{cases}
        0 &\text{if } \vy \lValidates \lnot\lBox\lphi \land \lphi \\
        1 &\text{if } \vy \lValidates \lnot\lBox\lphi \land \lnot\lphi \\
        2 &\text{if } \vy \lValidates \lBox\lphi.
    \end{cases}
    \]
    Note that it follows from the assumption that $\ff$ is defined on all of $\frF[{gen subfr=\vx}]$ and for each $\nii \in \sNumber{2}$, $\ff[preimage]{\nii}$ is definable, more precisely,
    \[
    \ff[preimage]{0} = \sem{\lnot\lBox\lphi \land \lphi}, \quad \ff[preimage]{1} = \sem{\lnot\lBox\lphi \land \lnot\lphi}, \quad \text{and} \quad \ff[preimage]{2} = \sem{\lBox\lphi}.
    \]
    Moreover, $\ff{\vx[1]} = 0$, $\ff{\vx[2]} = 1$, and $\ff{\vy} = 2$ iff $\vx[1] \relPre[strict] \vy$.
    Thus, $\ff$ is surjective, monotone, and satisfies the back-condition.
    Hence, the claim follows from \zcref{lemma:p-morphism-fine-formula}.
\end{proof}

 \begin{lemma}\zlabel{lemma:pattern-3-cluster}
     Let \(\frmM\) be a model on a preorder \(\frF = \structuple{\sX, \relPre}\), \(\sC\) a cluster in \(\frF\), \(\vx[1], \vx[2], \vx[3] \in \sC\) and \(\lphi, \lpsi\) formulas such that
     \begin{equation*}
         \vx[1] \lValidates \lphi \land \lpsi,
         \qquad
         \vx[2] \lValidates \lnot\lphi \land \lpsi,
         \quad\text{and}\quad
         \vx[3] \lValidates \lnot\lpsi
         .
     \end{equation*}
     If \(\sC\) is final, then
     \begin{equation*}
         \vx[1] \lNotValidates \lCluFr[2,subst={\lphi \land \lpsi, \lnot\lphi \land \lpsi, \lnot\lpsi}] .
     \end{equation*}
If \(\sC\) is non-final and \(\vx[1] \relPre[strict] \vy\) implies \(\vy \lValidates \lphi \land \lpsi\) for all \(\vy\), then
\begin{equation*}
         \vx[1] \lNotValidates \lCluFr[2,withtop,subst={\lnot\lBox\lphi \land \lphi \land \lpsi, \lnot\lBox\lphi \land \lnot\lphi \land \lpsi, \lnot\lBox\lphi \land \lnot\lpsi, \lBox\lphi}] .
     \end{equation*}
 \end{lemma}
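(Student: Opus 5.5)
As in the proof of \zcref{lemma:pattern-2-cluster}, the plan is to build an explicit p-morphism from the generated submodel onto the target frame whose fibres are exactly the truth sets of the substituted formulas, and then to apply \zcref{lemma:p-morphism-fine-formula}. Fix any $\vx \in \sC$. Every point of $\sC$ generates the same subframe, so $\frF[gen subfr=\vx] = \frF[gen subfr={\vx[1]}]$, and it suffices to work there.

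\emph{Final case.} Since $\sC$ is final, $\frF[gen subfr={\vx[1]}]$ is exactly $\sC$, and every point of it sees every other. I define $\ff$ into $\frCluster[3]$ by sending $\vy$ to $0$, $1$ or $2$ according to whether $\vy$ satisfies $\lphi\land\lpsi$, $\lnot\lphi\land\lpsi$ or $\lnot\lpsi$. These three formulas partition the points, so $\ff$ is well defined and each fibre is the truth set of the corresponding formula. The points $\vx[1],\vx[2],\vx[3]$ witness surjectivity. Both $\sC$ and $\frCluster[3]$ are total relations, so monotonicity is immediate, and the back condition follows from surjectivity. Since $\vx[1]\lValidates\lphi\land\lpsi$, \zcref{lemma:p-morphism-fine-formula} gives $\vx[1]\lNotValidates\lCluFr[2,subst={\lphi \land \lpsi, \lnot\lphi \land \lpsi, \lnot\lpsi}]$.

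\emph{Non-final case.} Here the target is $\frCluster[withtop,3]$, whose points $0,1,2$ form a cluster lying below the top point $3$. I define $\ff(\vy)=3$ if $\vy\lValidates\lBox\lphi$. Otherwise $\ff(\vy)$ is $0$, $1$ or $2$ according to whether $\vy$ satisfies $\lphi\land\lpsi$, $\lnot\lphi\land\lpsi$ or $\lnot\lpsi$, in each case conjoined with $\lnot\lBox\lphi$. The fibres are then exactly the truth sets of the four substituted formulas.

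The key step is to show that $\ff(\vy)=3$ iff $\vx[1]\relPre[strict]\vy$.
\begin{itemize}
\item If $\vx[1]\relPre[strict]\vy$, then every $\vz$ with $\vy\relPre\vz$ also satisfies $\vx[1]\relPre[strict]\vz$. By hypothesis such $\vz$ satisfies $\lphi\land\lpsi$, so $\vy\lValidates\lBox\lphi$.
\item Conversely, every $\vy$ in $\sC$ sees $\vx[2]$, which satisfies $\lnot\lphi$, so $\vy\lValidates\lnot\lBox\lphi$.
\item Every point of the generated subframe lies either in $\sC$ or strictly above $\vx[1]$.
\end{itemize}
Hence $\ff^{-1}(3)$ is the part strictly above $\sC$, and $\sC$ is mapped onto $\{0,1,2\}$, with $\vx[1],\vx[2],\vx[3]$ hitting $0,1,2$ respectively. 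This uses $\vx[3]\lValidates\lnot\lBox\lphi$, which holds because $\vx[3]\in\sC$. The set strictly above $\sC$ is nonempty because $\sC$ is non-final, so $3$ is hit as well.

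Monotonicity now follows: points of $\sC$ map into the lower cluster, and points above map to the top. For the back condition:
\begin{itemize}
\item From a point of $\sC$, the targets $0,1,2$ are realised inside $\sC$ by $\vx[1],\vx[2],\vx[3]$, and the target $3$ is realised by some point strictly above $\sC$.
\item From a point mapped to $3$, the only target is $3$, and the point itself witnesses it.
\end{itemize}
Applying \zcref{lemma:p-morphism-fine-formula} with $\vx[1]\in\sem{\lnot\lBox\lphi\land\lphi\land\lpsi}$ concludes the proof. The only delicate point is the characterisation of $\ff^{-1}(3)$, which the $\lBox\lphi$ argument above handles.
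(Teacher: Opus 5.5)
Your proposal is correct and follows the paper's own proof: you build the same explicit p-morphism, partitioning the generated subframe by $\lnot\lbox\varphi\land\varphi\land\psi$, $\lnot\lbox\varphi\land\lnot\varphi\land\psi$, $\lnot\lbox\varphi\land\lnot\psi$ and $\lbox\varphi$ (or by $\varphi\land\psi$, $\lnot\varphi\land\psi$, $\lnot\psi$ in the final case), and you conclude with \zcref{lemma:p-morphism-fine-formula}. The only difference is that you spell out what the paper leaves to the reader: that the top fibre is exactly the set of points strictly above $x_1$, that this set is nonempty by non-finality, and the details of the final case.
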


 \begin{proof}
     We only consider the case where $\sC$ is non-final.
     The other case is very similar.
     We consider the submodel $\frmM[{gen subfr=\vx}]$ of $\frmM$ generated by any element \(\vx \in \sC\), and define the map $\ff$ from $\frF[{gen subfr=\vx}]$ to $\frCluster[3,withtop]$ by
    \[
    \ff{\vy} \coloneqq \begin{cases}
        0 &\text{if } \vy \lValidates \lnot\lBox\lphi \land \lphi \land \lpsi \\
        1 &\text{if } \vy \lValidates \lnot\lBox\lphi \land \lnot\lphi \land \lpsi \\
        2 &\text{if } \vy \lValidates \lnot\lBox\lphi \land \lnot\lpsi \\
        3 &\text{if } \vy \lValidates \lBox\lphi
    \end{cases}
    \]
    Note that it follows from the assumption that $\ff$ is defined on all of $\frF[{gen subfr=\vx}]$ and for each $\nii\in \sNumber{3}$, $\ff[preimage]{\nii}$ is definable, more precisely,
    \begin{align*}
         \ff[preimage]{0} = \sem{\lnot\lBox\lphi \land \lphi \land \lpsi}, \quad
         \ff[preimage]{1} = \sem{\lnot\lBox\lphi \land \lnot\lphi \land \lpsi},\\
         \ff[preimage]{2} = \sem{ \lnot\lBox\lphi \land \lnot\lpsi}, \quad
         \ff[preimage]{3} = \sem{\lBox\lphi}.
    \end{align*}
    Moreover, $\ff{\vx[1]} = 0$, $\ff{\vx[2]} = 1$, $\ff{\vx[3]} = 2$, and $\ff{\vy} = 3$ iff $\vx[1] \relPre[strict] \vy$.
    Thus, $\ff$ is surjective, monotone, and satisfies the back-condition.
    Hence, the claim follows from \zcref{lemma:p-morphism-fine-formula}.
 \end{proof}

A Smoryński model $\frmM[\lthSigma]$ is only guaranteed to satisfy the theorems of the logic that are contained in $\lthSigma$.
 In particular, a Smoryński model for \(\Gamma(\logLambda, \nm, \nn)\) might not satisfy arbitrary substitutions of \(\lCluFr[\nm]\) and \(\lCluFr[\nn]\).
 However, for our purposes it suffices that it satisfies \enquote{enough} substitutions, which leads to the following definition of \emph{relative satisfaction}.

 \begin{definition}[Relative satisfaction]
     Let \(\lthSigma\) be a set of formulas, \(\frmM\) a preorder model, and \(\lchi\) a formula in atomic propositions \(\lap[0], \dots, \lap[\nn - 1]\).
     We say that a cluster \(\sC\) in \(\frmM\) \emph{satisfies \(\lchi\) relative to \(\lthSigma\)} if for all \(\lphi[0], \dots, \lphi[\nn - 1] \in \lthSigma\), we have \(\sC \sSubsetEq \sem{\lchi[{subst={\lphi[0], \dots, \lphi[\nn - 1]}}]}\).
 \end{definition}

Using the Pattern Lemmata, we show that, for \(\nn \in \Set{1, 2}\), any final cluster satisfying \(\lCluFr[\nn]\) relative to \(\lthSigma\) or non-final cluster satisfying \(\lCluFr[\nn, withtop]\) relative to \(\lthSigma\), contains a \(\lthSigma\)-adequate doubleton \textendash{} singleton in case of \(\nn = 1\).
Using \zcref{lemma:adequate-pairs} it then follows that we can \enquote{refine} such clusters, removing the potential violation of the cluster size bound.

\begin{lemma}[Main Lemma]\zlabel{lemma:cluster-elimination}
    For $\nii = 1,2$, let \(\lthSigma[\nii]\) be a Boolean and subformula closed set of formulas,
    \(\frmM = \structuple{\sX, \relPre, \fVal}\), and
    \(\nn \in \Set{1, 2}\).
    Let \(\sC\) be a final cluster in \(\frmM\) which satisfies \(\lCluFr[\nn]\) relative to \(\lthSigma[\nii]\), or a non-final cluster in \(\frmM\) which satisfies \(\lCluFr[\nn, withtop]\) relative to \(\lthSigma[\nii]\).
    Then there exists a subpreorder \(\relPre[']\) of \(\relPre\) such that
    \begin{thmenumerate}
        \item \(\relPre\) and \(\relPre[']\) coincide when restricted to \(\sX[pow=2] \sMinus \sC[pow=2]\),
        \item \(\sC\) does not contain a \(\relPre[']\)-cluster of size \(> \nn\), and
        \item \(\sem{\lphi}[\frmM] = \sem{\lphi}[\frmM[']]\) for all \(\lphi \in \lthSigma\), where \(\frmM['] \coloneqq \structuple{\sX, \relPre['], \fVal}\).
    \end{thmenumerate}
    Moreover, if \(\frmM\) is confluent, then so is \(\frmM[']\).
\end{lemma}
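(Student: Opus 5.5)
The plan is to reduce everything to \zcref{lemma:adequate-pairs}. It suffices to find \(\vx[1], \vx[2] \in \sC\), with \(\vx[1] = \vx[2]\) allowed and forced when \(\nn = 1\), such that \(\Set{\vx[1], \vx[2]}\) is \(\lthSigma\)-adequate for \(\lthSigma = \lthSigma[1] \sUnion \lthSigma[2]\). The union \(\lthSigma\) is not Boolean closed, but the induction in the proof of \zcref{lemma:adequate-pairs} only uses subformula closure and adequacy, so it goes through for \(\lthSigma\).

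Given such a pair, \zcref{lemma:adequate-pairs} produces \(\relPre[']\). It only deletes pairs inside \(\sC[pow=2]\), which gives (i). It preserves every \(\lphi \in \lthSigma\), which gives (iii), and it preserves confluence. Inside \(\sC\), \(\vy \relPre['] \vz\) iff \(\vy = \vz\) or \(\vz \in \Set{\vx[1], \vx[2]}\). So every \(\relPre[']\)-cluster in \(\sC\) is either a singleton or contained in \(\Set{\vx[1], \vx[2]}\), which gives (ii).

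To find the pair, I would analyse the obstructions to adequacy separately for each \(\nii\). For \(\nii = 1, 2\), let \(\sF[\nii]\) be the family of sets \(\sem{\lphi} \sIntersection \sC\) with \(\lphi \in \lthSigma[\nii]\) such that:
\begin{itemize}
    \item every \(\vy\) with \(\sC \relPre[strict] \vy\) satisfies \(\lphi\) (vacuous when \(\sC\) is final), and
    \item \(\sem{\lphi} \sIntersection \sC \neq \sC\).
\end{itemize}
If \(\lBox\lphi \in \lthSigma[\nii]\) and both points satisfy \(\lnot\lBox\lphi \land \lphi\) without a strict successor refuting \(\lphi\), then \(\lphi\) is refuted somewhere in \(\sC\), and \(\sem{\lphi} \sIntersection \sC\) is a member of \(\sF[\nii]\) containing \(\vx[1]\) and \(\vx[2]\). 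Hence \(\Set{\vx[1], \vx[2]}\) is adequate as soon as no member of \(\sF[1] \sUnion \sF[2]\) contains both points.

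Next I determine the shape of \(\sF[\nii]\) using the Pattern Lemmata. Since \(\lthSigma[\nii]\) is Boolean closed, \(\sF[\nii]\) is closed under intersection, and under union as long as the union stays proper.
\begin{itemize}
    \item Case \(\nn = 1\). Suppose \(\sF[\nii]\) has a nonempty member \(\sem{\lphi} \sIntersection \sC\). Take \(\vx[1]\) in it and \(\vx[2]\) in its complement in \(\sC\). Then \zcref{lemma:pattern-2-cluster} refutes the appropriate substitution of \(\lCluFr[1]\) or \(\lCluFr[withtop,1]\) at \(\vx[1]\). The substituted formulas lie in \(\lthSigma[\nii]\) by Boolean and box closure, so this contradicts relative satisfaction. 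Hence every member of \(\sF[\nii]\) is empty, and any single point \(\vx[1] = \vx[2] \in \sC\) is adequate.
    \item Case \(\nn = 2\). Take nonempty members \(\sem{\lphi}, \sem{\lpsi}\) of \(\sF[\nii]\). Use \(\lphi \land \lpsi\), which again has the successor property, in the role of \(\lphi \land \lpsi\) in \zcref{lemma:pattern-3-cluster}. The lemma then forbids points \(\vx[1] \in \sem{\lphi} \sIntersection \sem{\lpsi}\), \(\vx[2] \in \sem{\lpsi} \sMinus \sem{\lphi}\) and \(\vx[3] \in \sC \sMinus \sem{\lpsi}\). Since \(\sem{\lpsi}\) is proper, such an \(\vx[3]\) exists, so \(\sem{\lphi} \sIntersection \sem{\lpsi}\) is either \(\sEmpty\) or all of \(\sem{\lpsi}\) (within \(\sC\)). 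By symmetry, any two nonempty members are nested or disjoint. Two disjoint nonempty members \(A, B\) would have a proper union \(A \sUnion B\) in \(\sF[\nii]\); applying the previous fact to \(A\) and \(A \sUnion B\) gives a contradiction. Therefore the members of \(\sF[\nii]\) form a chain. Since \(\sC\) is finite, the chain has a largest member \(\sM[\nii] \subsetneq \sC\). I then choose \(\vx[1] \in \sC \sMinus \sM[1]\) and \(\vx[2] \in \sC \sMinus \sM[2]\). Every member of \(\sF[1]\) misses \(\vx[1]\) and every member of \(\sF[2]\) misses \(\vx[2]\), so the doubleton is adequate.
\end{itemize}

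I expect the main obstacle to be exactly this interaction between \(\lthSigma[1]\) and \(\lthSigma[2]\). The relative-satisfaction hypotheses only allow substitutions from a single \(\lthSigma[\nii]\), so a witness \(\lphi \in \lthSigma[1]\) cannot be combined with a witness \(\lpsi \in \lthSigma[2]\) in one pattern. The chain analysis above is what gets around this: it lets each obstruction be avoided by its own point. The delicate details are the following.
\begin{itemize}
    \item The successor property must be preserved under \(\land\) and \(\lor\) (it is not preserved under \(\lnot\)).
    \item The exact substitutions must match the Pattern Lemmata. In particular, \(\lnot\lBox\lphi\) holds throughout \(\sC\) precisely because \(\sem{\lphi} \sIntersection \sC\) is proper.
    \item In the final-cluster case the successor condition is vacuous, and one must check that the same argument goes through with \(\lCluFr[\nn]\) in place of \(\lCluFr[\nn, withtop]\).
\end{itemize}
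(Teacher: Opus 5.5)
\textbf{There is a genuine gap in the case $n=2$.} Your reduction to \zcref{lemma:adequate-pairs} is fine, including the remark that only subformula closure of $\Sigma_1\cup\Sigma_2$ is used there. So are the derivation of (i)--(iii) and the case $n=1$.

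The gap is the claim that two disjoint nonempty members $A,B$ of $F_i$ have a proper union. Nothing forces $A\cup B\neq C$. Relative satisfaction is compatible with $F_i$ containing a complementary pair $A$, $C\setminus A$. For a final cluster this is even the typical situation: if $\varphi$ defines $A$, then $\lnot\varphi$ defines $C\setminus A$.

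Here is a concrete instance. Let $C=\{a,b,c\}$ be a final cluster, $\Sigma_1$ the set of all formulas in one variable $p$, and let $p$ hold only at $a$.
\begin{itemize}
\item The points $b$ and $c$ satisfy the same $\Sigma_1$-formulas. A refutation of a $\Sigma_1$-substitution of $\gamma_2$ would require, by \zcref{lemma:p-morphism-fine-formula}, $\Sigma_1$-definable singletons, so $C$ satisfies $\gamma_2$ relative to $\Sigma_1$.
\item Yet $\{a\}$ and $\{b,c\}$ both belong to $F_1$. So $F_1$ is not a chain, has no largest member, and every point of $C$ lies in some member of $F_1$.
\item In fact no singleton is $\Sigma_1$-adequate: $\{a\}$ is spoiled by $\lbox p$, and $\{b\}$, $\{c\}$ by $\lbox\lnot p$.
\end{itemize}
A non-final variant: add a point $t$ above $C$ with $p$ true at $t$, and use $p$ and $\lnot p\lor\lbox p$. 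In either case the choice $x_1\in C\setminus M_1$ is impossible. Your argument breaks down exactly when adequacy for a single $\Sigma_i$ already needs two points.

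\textbf{How to repair it.} Your pattern analysis already contains the fix.
\begin{itemize}
\item Apply your fact ``$A\cap B$ is $\emptyset$ or $B$'' in both directions. This shows that two nonempty members that meet are equal.
\item Your argument with $\varphi$ and $\varphi\lor\psi$ correctly excludes two disjoint nonempty members with proper union, and hence also three pairwise disjoint ones.
\item So the nonempty members of $F_i$ are none, a single proper set $M_i$, or a complementary pair $A_i$, $C\setminus A_i$. In the last case a doubleton is $\Sigma_i$-adequate iff it is split by $A_i$.
\end{itemize}
A short case analysis then produces the doubleton.
\begin{itemize}
\item If both families are of pair type, take a point of $A_1\cap A_2$ and one of $C\setminus(A_1\cup A_2)$. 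Otherwise take a point of $A_1\setminus A_2$ and one of $A_2\setminus A_1$. One of these options is always available, since $A_1,A_2$ are proper and nonempty.
\item If only one family, say $F_1$, is of pair type, take $z$ outside all members of $F_2$ and pair it with a point on the other side of $A_1$.
\item Otherwise your original choice works.
\end{itemize}
This is in substance the paper's argument. It fixes $x\in C$, finds $y_i$ with $\{x,y_i\}$ $\Sigma_i$-adequate, and shows that one of $\{x,y_1\}$, $\{x,y_2\}$, $\{y_1,y_2\}$ is $\Sigma$-adequate. Working with pairs rather than a single avoiding point is what handles the complementary case.

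\textbf{A smaller slip.} The lemma does not assume box closure, so restrict $F_i$ to sets defined by some $\varphi$ with $\lbox\varphi\in\Sigma_i$; this is all adequacy needs. The substitutions in \zcref{lemma:pattern-2-cluster,lemma:pattern-3-cluster} only involve $\lbox$ of the first formula, so everything above still goes through.
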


 \begin{proof}
     We only consider the cases where $\sC$ is a non-final cluster.
     The cases where $\sC$ is a final cluster are essentially the same and differ only in that the existence of the respective $\lthSigma$-adequate singleton/doubleton is shown using the respective other part of \zcref{lemma:pattern-2-cluster,lemma:pattern-3-cluster}.

     We first consider the case where $\nn = 1$. Let $\vx \in \sC$.
     By \zcref{lemma:adequate-pairs}, it suffices to show that $\Set{\vx}$ is $\lthSigma$-adequate.
     Suppose not.
     Then there exists a formula $\lbox\lphi \in \lthSigma$ such that $\vx \lValidates \lnot \lbox\lphi \land \lphi$ and for all $\vz\in \sX$ with $\vx \relPre[strict] \vz$, $\vz \lValidates \lphi$. Since $\vx \lValidates \lnot \lbox\lphi$, there exists $\vy \in \sC$ with $\vy \lValidates \lnot\lphi$.
     But then, by \zcref{lemma:pattern-2-cluster}, \(\vx \lNotValidates \lCluFr[1, withtop, subst={\lnot\lBox\lphi \land \lphi, \lnot\lBox\lphi \land \lnot\lphi, \lBox\lphi}]\).
     Since $\lbox\lphi \in \lthSigma[\nii]$ for some $\nii \in \Set{1,2}$ and $\lthSigma[\nii]$ is Boolean and subformula closed, this contradicts the assumption that $\sC$ satisfies \(\lCluFr[1, withtop]\) relative to \(\lthSigma[\nii]\).

     Now we assume that $\nn = 2$. Let $\vx \in \sC$. Again, by  \zcref{lemma:adequate-pairs}, it is enough to show that $\sC$ contains a $\lthSigma$-adequate doubleton.
     We first show that there exist $\vy[1],\vy[2] \in \sC$ such that $\Set{\vx,\vy[\nii]}$ is $\lthSigma[\nii]$-adequate for \(\nii \in \Set{1, 2}\).
     Suppose for a contradiction that for $\nii \in \Set{1, 2}$ there is no such $\vy[\nii]$.
     Then $\Set{\vx}$ is not $\lthSigma[\nii]$-adequate, so there exists $\lbox\lphi \in \lthSigma[\nii]$ such that
     $\vx \lValidates \lnot\lbox\lphi \land\lphi$ and $\vz \lValidates \lphi$ for all $\vz \in \sX$ with $\vx \relPre[strict] \vz$.
     Hence, there exists $\vx[1] \in \sC$ with $\vx[1] \lValidates \lnot\lphi$.
     Now, since $\Set{\vx,\vx[1]}$ is not $\lthSigma[\nii]$-adequate, there exists $\lbox\lpsi \in \lthSigma[\nii]$ such that $\vx\lValidates \lnot\lbox\lpsi \land \lpsi$, $\vx[1]\lValidates \lnot\lbox\lpsi \land \lpsi$, and $\vz \lValidates \lpsi$ for all $\vz \in \sX$ with $\vx \relPre[strict] \vz$.
     Thus, there exists $\vx[2] \in \sC$ with $\vx[2] \lValidates \lnot\lphi$.
     Summarizing, we have $\vx \lValidates \lphi\land \lpsi$, $\vx[1] \lValidates \lnot\lphi \land \lpsi$, $\vx[2] \lValidates \lnot\lpsi$, and $\vx \relPre[strict] \vz$ implies $\vz\lValidates \lphi\land \lpsi$.
     Hence, by \zcref{lemma:pattern-3-cluster}, $\vx \lNotValidates \lCluFr[2, withtop, subst={\lnot\lBox\lphi \land \lphi \land \lpsi, \lnot\lBox\lphi \land \lnot\lphi \land \lpsi, \lnot\lBox\lphi \land \lnot\lpsi, \lBox\lphi}]$,
     Since $\lbox\lphi,\lbox\lpsi \in \lthSigma[\nii]$ and $\lthSigma[\nii]$ is Boolean and subformula closed, this contradicts the assumption that $\sC$ satisfies \(\lCluFr[2, withtop]\) relative to \(\lthSigma[\nii]\).

     We conclude that there exist $\vy[1], \vy[2] \in \sC$ such that $\Set{\vx,\vy[\nii]}$ is $\lthSigma[\nii]$-adequate.
     Next we show that either $\Set{\vx,\vy[1]}$, $\Set{\vx,\vy[2]}$, or $\Set{\vy[1],\vy[2]}$ is $\lthSigma$-adequate. Assume that   $\Set{\vx,\vy[1]}$ and  $\Set{\vx,\vy[2]}$ are not $\lthSigma$-adequate, i.e., $\Set{\vx,\vy[1]}$ is not $\lthSigma[2]$-adequate and $\Set{\vx,\vy[2]}$ is not $\lthSigma[1]$-adequate.
     Suppose for a contradiction that $\Set{\vy[1], \vy[2]}$ is not $\lthSigma[1]$-adequate.
     Then there exists $\lbox\lphi \in \lthSigma[1]$ such that $\vy[1] \lValidates \lnot\lbox\lphi \land\lphi$, $\vy[2] \lValidates \lnot\lbox\lphi \land\lphi$, and $\vz \lValidates \lphi$ for all $\vz \in \sX$ with $\vy[1] \relPre[strict] \vz$.
     But also, since $\Set{\vx,\vy[2]}$ is not $\lthSigma[1]$-adequate, there exists $\lbox\lpsi \in \lthSigma[1]$ such that $\vx \lValidates \lnot\lbox\lpsi \land\lpsi$, $\vy[2] \lValidates \lnot\lbox\lpsi \land\lpsi$, and $\vz \lValidates \lpsi$ for all $\vz \in \sX$ with $\vx \relPre[strict] \vz$.
     Therefore, since $\Set{\vx,\vy[1]}$ is $\lthSigma[1]$-adequate, $\vx \lValidates \lnot\lphi$ and $\vy[1] \lValidates \lnot\lpsi$.

     Summarizing, we have $\vy[2] \lValidates \lphi\land \lpsi$, $\vx \lValidates \lnot\lphi \land \lpsi$, $\vy[1] \lValidates \lnot\lpsi$, and $\vx \relPre[strict] \vz$ implies $\vz\lValidates \lphi\land \lpsi$.
     As above, by \zcref{lemma:pattern-3-cluster}, we obtain a contradiction.
     Hence, $\Set{\vy[1],\vy[2]}$ is $\lthSigma[1]$-adequate.
     Similarly, it follows that $\Set{\vy[1],\vy[2]}$ is $\lthSigma[2]$-adequate, yielding that $\Set{\vy[1],\vy[2]}$ is $\lthSigma$-adequate.

     Hence, it follows that there exists a $\lthSigma$-adequate doubleton $\Set{\vx[1],\vx[2]} \subseteq \sC$.
 \end{proof}
 
    \section{Main Theorem and Conclusion}\label{sec:main-thm}
    Now we are ready to show the main result of this paper, but before we proceed with the proof we fix the following useful notation.

\begin{definition}
    Let $\lthSigma$ be a set of formulas.
    We denote by  $\boolsub{\lthSigma}$ the Boolean closure of the subformula closure of $\lthSigma$ and for a formula $\lchi$ in atomic propositions $\lap[0],\dots,\lap[\nn-1]$ the \emph{$\lchi$-closure of $\lthSigma$} is defined by
    \[
    \boolsub[pow=\lchi]{\lthSigma} \coloneqq\boolsub{\lthSigma \sUnion \Set{\lchi[{subst={\lphi[0], \dots, \lphi[\nn - 1]}}]}[\lphi[0], \dots, \lphi[\nn - 1] \in \lthSigma]}.
    \]
\end{definition}

\begin{theorem}[Main Theorem]\zlabel{thm:mainthm}
    Let $\logLambda \in \Set{\logInt,\logKC}$ and $\nm,\nn \in \Set{1,2,\omega}$.
    Then the logic $\logGamma = \Gamma(\logLambda,\nm,\nn)$ has the Craig interpolation property.
\end{theorem}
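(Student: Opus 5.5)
The plan is to argue by contraposition along the lines sketched at the end of \zcref{sec:smorynski}. Suppose $\lphi \limplies \lpsi \in \logGamma$ has no interpolant. We build a finite $\logGamma$-model refuting $\lphi \limplies \lpsi$, which contradicts soundness.

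\textbf{Choice of $\lthSigma[1]$ and $\lthSigma[2]$.} We want sets that are large enough to "see" the relevant substitution instances of the cluster formulas. Let $\lchi \coloneqq \lCluFr[\nm]$ and $\lchi['] \coloneqq \lCluFr[withtop,\nn]$. These are $\ltrue$ when the parameter is $\omega$, so those cases are trivial.

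\begin{itemize}
\item Start from the subformulas of $\lphi$ (respectively $\lpsi$).
\item Take the $\lchi$-closure and then the $\lchi[']$-closure, i.e. $\boolsub[pow=\lchi']{\boolsub[pow=\lchi]{\cdot}}$.
\item Then close under $\lnot$ and $\lBox$.
\end{itemize}

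Each $\lthSigma[\nii]$ must remain finite up to $\logKReflTrans$-equivalence. We get this by working with formulas of bounded modal depth in finitely many variables, i.e. closing under $\lBox$ only up to equivalence. The point of this choice is the following. Whenever $\lalpha[0],\dots \in \lthSigma[\nii]$ are among the formulas used in the Main Lemma (the $\lBox\lphi$, $\lphi$ and Boolean combinations occurring in \zcref{lemma:pattern-2-cluster,lemma:pattern-3-cluster}), the corresponding substitution instance of $\lchi$ or $\lchi[']$ lies in $\lthSigma[\nii]$. I expect this bookkeeping to be the main technical obstacle. The closure must contain exactly the instances needed by the pattern lemmata, while remaining finite up to equivalence and closed under $\lnot$ and $\lBox$ as the Smoryński construction requires. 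If the naive closure is not finite, I would restrict the substituted formulas to a finite set of representatives of bounded depth and verify that the proof of \zcref{lemma:cluster-elimination} only ever substitutes such formulas.

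\textbf{The Smoryński model.} Form $\frmM[\lthSigma]$ with respect to the logic $\logGamma$. Since $\Set{\lphi,\lnot\lpsi}$ is inseparable, some $\sT$ contains it, so by \zcref{lemma:truth-lemma} $\sT$ refutes $\lphi \limplies \lpsi$. Moreover, every substitution instance $\lchi[subst={\lalpha[0],\dots}]$ with $\lalpha[\nj] \in \lthSigma[\nii]$ is a theorem of $\logGamma$ lying in $\lthSigma[\nii]$. By \zsubref{lemma:maximal-separable-pairs}{lemma:maximal-separable-pairs:equivalence} it belongs to every maximal set, and so by the Truth Lemma it holds at every point. Hence every final cluster satisfies $\lCluFr[\nm]$ relative to $\lthSigma[\nii]$, and every non-final cluster satisfies $\lCluFr[withtop,\nn]$ relative to $\lthSigma[\nii]$, for $\nii=1,2$. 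If $\logLambda = \logKC$, the frame is confluent by \zcref{lemma:Henkin-confluence}.

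\textbf{Refining the clusters.} Apply \zcref{lemma:cluster-elimination} successively to each cluster that is too large: final clusters with bound $\nm$, non-final ones with bound $\nn$. We need each step to leave the hypotheses intact for the remaining clusters.

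\begin{itemize}
\item Distinct clusters are disjoint.
\item Each step changes $\relPre$ only inside $\sC[pow=2]$, so the other clusters, and their final or non-final status, are unchanged.
\item Truth sets of formulas in $\lthSigma$ are preserved, so relative satisfaction continues to hold for the remaining clusters.
\item Confluence is preserved at each step.
\end{itemize}

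The refinement of $\sC$ produces a $\relPre[']$-cluster $\Set{\vx[1],\vx[2]}$ (or a singleton) on top, with all other points of $\sC$ becoming non-final singletons below it. If $\sC$ was final, the new top cluster is final of size at most $\nm$, and the new singletons satisfy $1 \le \nn$. If $\sC$ was non-final, everything in it has size at most $\nn$.

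\textbf{Conclusion.} After finitely many steps we obtain a finite preorder model (confluent when $\logLambda=\logKC$) whose final clusters have size at most $\nm$ and whose non-final clusters have size at most $\nn$. This is a $\logGamma$-frame by the frame characterization of $\Gamma(\logLambda,\nm,\nn)$. Since $\lphi,\lpsi \in \lthSigma$, the truth sets of $\lphi$ and $\lpsi$ are unchanged, so $\sT$ still refutes $\lphi\limplies\lpsi$, contradicting $\lphi\limplies\lpsi\in\logGamma$.
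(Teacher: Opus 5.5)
Your overall architecture is the right one: a Smoryński model for $\Gamma$, relative satisfaction via the Truth Lemma, refinement via the Main Lemma, and confluence from the confluence lemma. The gap is in carrying the hypotheses of the Main Lemma from one refinement step to the next.

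You claim that every instance $\chi[\alpha_0,\dots]$ with $\alpha_j\in\Sigma_i$ lies in $\Sigma_i$. From this you get relative satisfaction with respect to the full $\Sigma_i$, and you argue it survives each step because truth sets of $\Sigma$-formulas are preserved. No admissible $\Sigma_i$ can have this closure property. A subformula of $\gamma_1[\alpha_0,\alpha_1]$ is $\alpha_0\lor\alpha_1$. Together with closure under $\lnot$ and $\lbox$, this would force $\Sigma_i$ to contain, up to equivalence, every formula in the variables of $\phi$, which is infinite up to $\mathbf{S4}$-equivalence; the other cluster formulas behave the same way. Moreover, the Main Lemma needs a Boolean closed set, and the $\lnot,\lbox$-closed $\Sigma_i$ is not one.

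So relative satisfaction is only available with respect to a base set such as $\Sigma_i'=\mathrm{cl}(\phi_i)$, whose one-step $\chi$-closure is put into $\Sigma_i$, and the Main Lemma must be applied to that base set. But then clause (iii) only preserves truth sets of formulas in $\Sigma_1'\cup\Sigma_2'$. The instances $\chi[\alpha_0,\dots]$ are not in that set. They contain boxes of Boolean combinations of the $\alpha_j$, evaluated along the modified relation. Hence their truth on another oversized cluster $C'$ is no longer guaranteed once a cluster $C$ above $C'$ has been refined.

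Your fallback of restricting to finitely many representatives does not escape this; it only produces a regress. Preserving the instances would require relative satisfaction with respect to a set containing them, hence the next level of instances, and so on. The number of levels needed is not known before the model is built.

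The missing idea is to control the order of refinement rather than rely on truth preservation. Always refine an offending cluster $C$ that is $\preceq$-minimal among the remaining offending clusters. Then $C$ is not contained in the upset of any other remaining offending cluster $C'$. Since the modification only removes pairs from $C^2$, the submodel generated by $C'$ is literally unchanged. So every formula keeps its truth value on $C'$, in particular every instance $\chi[\alpha_0,\dots]$ with $\alpha_j\in\Sigma_i'$. This is how the paper proceeds: non-final clusters are refined minimal-first, and then the final clusters, which are pairwise incomparable.

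Two smaller points:
\begin{itemize}
\item The base set must be Boolean closed before you form the $\gamma_m$-closure. The pattern lemmata substitute Boolean combinations such as $\lnot\lbox\phi\land\phi\land\psi$, and your inner closure of the bare subformula set misses these.
\item The $\lnot,\lbox$-closure stays finite up to equivalence because $\mathbf{S4}$ has only finitely many modalities. Bounded modal depth alone does not give this, since closing under $\lnot$ and $\lbox$ does not bound depth.
\end{itemize}
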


\begin{proof}
    Recall that $\logGamma = \modcomp{\logLambda} \oplus \lCluFr[\nm] \oplus \lCluFr[\nn,withtop]$.

    Suppose for a contradiction that $\logGamma$ does not have the Craig interpolation property.
    Then there exist formulas $\lphi[1],\lphi[2]$ such that $\lProves[\logGamma] \lphi[1] \limplies \lphi[2]$ and $\lphi[1] \limplies \lphi[2]$ does not have an interpolant in $\logGamma$.
    Let $\lthSigma[\nii,'] \coloneqq \boolsub{\lphi[\nii]}$ and let $\lthSigma[\nii]$ be the set obtained by closing $\boolsub[pow={\lCluFr[\nm]}]{\lthSigma[\nii,']} \sUnion\boolsub[pow={\lCluFr[\nn,withtop]}]{\lthSigma[\nii,']}$ under boxes and negations for $\nii\in \Set{1,2}$.

    Note that $\lthSigma[\nii,']$ is finite up to logical equivalence in $\logKReflTrans$.
    Moreover, since there are only 14 distinct modalities built from $\lnot$ and $\lbox$ in $\logKReflTrans$, also $\lthSigma[\nii]$ is finite.
    Furthermore, $\lthSigma[\nii]$ is subformula closed and closed under $\lnot$ and $\lbox$.
    Thus for $\lthSigma = \lthSigma[1] \sUnion \lthSigma[2]$, we can define the Smoryński model $\frmM[\lthSigma]$ with respect to $\logGamma$, and this is a finite preorder model.
    By \zcref{lemma:Henkin-confluence}, if $\logLambda = \logKC$, i.e., $\modcomp{\logLambda} = \logKReflTransConfl$, then $\frmM[\lthSigma]$ is confluent.
    Hence, $\frmM[\lthSigma]$ is based on a $\modcomp{\logLambda}$-frame.

    Since $\lCluFr[\nm],\lCluFr[\nn,withtop] \in \logGamma$ and, by construction,  $\lthSigma[\nii]$ contains the $\lCluFr[\nm]$- and $\lCluFr[\nn,withtop]$-closures of $\lthSigma[',\nii]$, it follows from \zcref{lemma:truth-lemma,lemma:maximal-separable-pairs}, that every cluster in $\frmM[\lthSigma]$ satisfies $\lCluFr[\nm]$ and $\lCluFr[\nn,withtop]$ relative to $\lthSigma[',\nii]$ for $\nii \in \Set{1,2}$.
    Since, by assumption, $\lphi[1] \limplies \lphi[2]$ does not have an interpolant in $\logGamma$, the set $\Set{\lphi[1],\lnot\lphi[2]}$ is \(\homotuple{\logGamma,\lthSigma[1],\lthSigma[2]}\)-inseparable.
    By \zcref{lemma:maximal-separable-pairs}, it extends to a \(\homotuple{\logGamma,\lthSigma[1],\lthSigma[2]}\)-maximal subset $\sT$ of $\lthSigma$.
    Moreover, by \zcref{lemma:truth-lemma}, we have $\frmM[\lthSigma],\sT \lNotValidates \lphi[1] \limplies \lphi[2]$.

    To conclude the proof, we will refine the clusters in $\frmM[\Sigma]$ that are too large, to obtain a model refuting $\lphi[1]\limplies \lphi[2]$ based on a $\logGamma$-frame, contradicting our assumption that $\lphi[1]\limplies \lphi[2]$ is valid in $\logGamma$.

    If $\nn = \omega$, then we let $\frmM['] = \frmM[\Sigma]$.
    Otherwise,  $\nn \in \Set{1,2}$ and, since \(\frmM[\lthSigma]\) is finite, we can apply \zcref{lemma:cluster-elimination} inductively to successively remove the minimal clusters of cardinality $> \nn$.
    In each step we get a finite model on a $\modcomp{\logLambda}$-frame with one less non-final cluster of size $> \nn$.
    By always refining a cluster of size $> \nn$ which is minimal for the accessibility ordering, the remaining clusters of size $> \nn$ still satisfy $\lCluFr[\nm]$ and $\lCluFr[\nn,withtop]$ relative to $\lthSigma[',\nii]$, for $\nii = 1,2$.
    Hence we obtain a finite model $\frmM[']$ which is still based on a $\modcomp{\logLambda}$-frame and still refutes $\lphi[1] \limplies \lphi[2]$, such that all non-final clusters are of size at most $\nn$, and all final clusters satisfy $\lCluFr[\nm]$ relative to $\lthSigma[',\nii]$.

    If $\nm = \omega$, then $\frmM[']$ is already a based on a frame of $\logGamma$.
    Otherwise, $\nm \in \Set{1,2}$ and by applying \zcref{lemma:cluster-elimination} successively on all final clusters of size $> \nm$ in $\frmM[']$ we obtain a $\modcomp{\logLambda}$-model that refutes $\lphi[1] \limplies \lphi[2]$ and is based on a $\logGamma$-frame.
\end{proof}

\begin{remark}
    Note that we give a uniform proof of the Craig interpolation property for all of the modal companions of $\logInt$ or $\logKC$  with the Craig interpolation property.
    Therefore, our result subsumes earlier results such as Boolos' \cite{Boolos1980-systems-of-modal-logic-with-provability-interpretations} proof of the Craig interpolation property for $\logGrz = \Gamma(\logInt,1,1)$.
\end{remark}

\zcref{thm:mainthm} completes Maksimova's classification of the normal extensions of $\logKReflTrans$ with the Craig interpolation property, see \cite[Theorem~8.45, pp.~256\textendash{}257]{GM2005}.
In particular, this resolves Problem~14.3 of \textcite[p.~469]{CZ1997}.
Since the logics, for which the status the interpolation properties remained open, have the Craig interpolation property and, hence, also the deductive interpolation property, this completes the classification result for the deductive interpolation property as well.

\begin{corollary}\zlabel{cor:main-corollary}
    There are exactly 37 normal extensions of $\logKReflTrans$ with the Craig interpolation property and exactly 49 with the deductive interpolation property.
    In particular:
    \begin{thmenumerate}
        \item\zlabel{cor:main-corollary:CIP} A normal extension of $\logKReflTrans$ has the Craig interpolation property if and only if it is one of the following logics:
        \begin{align*}
            &\logFm \quad (\text{Inconsistent logic});
            &&
            \\
            &\Gamma(\logLambda,\nm,\nn),
            && \logLambda\in \Set{\logInt,\logKC},\ \nm,\nn \in \Set{1,2,\omega};
            \\
            &\Gamma(\logLambda,\nn,1),\ \Gamma(\logLambda,1,\nn),
            && \logLambda\in \Set{\logLPtwo,\logLV,\logLS},\ \nn\in \Set{1,2,\omega};
            \\
            &\Gamma(\logCl,\nn,0),
            &&\nn \in \Set{1,2,\omega}.
        \end{align*}
        \item\zlabel{cor:main-corollary:DIP} A normal extension of $\logKReflTrans$ has the deductive interpolation property if and only if it is a logic mentioned in \textup{(i)} or it is one of the following logics:
        \begin{align*}
            &\Gamma(\logLambda,\nm,\nn), \quad \logLambda\in \Set{\logLPtwo,\logLV,\logLS},\ \nm,\nn \in \Set{2,\omega}.
        \end{align*}
    \end{thmenumerate}
\end{corollary}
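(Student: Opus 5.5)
The count of 37 and 49 comes from combining \zcref{thm:mainthm} with Maksimova's existing results, as summarised in \cite[Theorem~8.45]{GM2005}. The plan is to quote that theorem as a sandwich and then close the gap with the main theorem.

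\textbf{Step 1: the upper bound.} \textcite{Maksimova1980-interpolation-theorems,Maksimova1981-interpolation-theorems-sufficient-conditions,Maksimova1987-interpolation-in-normal-modal-logics} showed that any normal extension of $\logKReflTrans$ with the \tCIP{} lies among the 37 logics listed in (i). Likewise, any extension with the deductive interpolation property lies among those 37 logics together with the 12 logics $\Gamma(\logLambda,\nm,\nn)$ for $\logLambda\in\Set{\logLPtwo,\logLV,\logLS}$ and $\nm,\nn\in\Set{2,\omega}$. This gives at most 37 and at most 49 logics respectively.

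\textbf{Step 2: the \tCIP{} for all 37 logics.} \textcite{GM2005} establish the \tCIP{} for 29 of them. Of the remaining eight, six are those of \zcref{tab:open cases}, and the other two are the unproved logics in its last row. All eight are of the form $\Gamma(\logLambda,\nm,\nn)$ with $\logLambda\in\Set{\logInt,\logKC}$ and $\nm,\nn\in\Set{1,2,\omega}$, so \zcref{thm:mainthm} covers them. This proves (i).

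\textbf{Step 3: deductive interpolation for the 49 logics.} The \tCIP{} implies the deductive interpolation property in normal extensions of $\logKReflTrans$: from $\lphi\lProves\lpsi$ one gets $\lProves\lBox\lphi\limplies\lpsi$ by the local deduction theorem for $\logKReflTrans$. A Craig interpolant $\lchi$ for this implication then yields $\lphi\lProves\lBox\lphi\lProves\lchi\lProves\lpsi$. So the 37 logics from (i) qualify, and the extra 12 have the deductive interpolation property by \cite[Theorem~8.45]{GM2005}. This proves (ii).

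\textbf{Step 4: counting.} Checking the total is routine bookkeeping:
\begin{itemize}
\item $1$ for $\logFm$;
\item $2\cdot 9 = 18$ for the logics $\Gamma(\logLambda,\nm,\nn)$ with $\logLambda\in\Set{\logInt,\logKC}$;
\item $3\cdot 5 = 15$ for $\logLPtwo,\logLV,\logLS$, since $\Gamma(\logLambda,\nn,1)$ and $\Gamma(\logLambda,1,\nn)$ give five distinct pairs per $\logLambda$ once $(1,1)$ is counted only once;
\item $3$ for $\Gamma(\logCl,\nn,0)$.
\end{itemize}
This gives $1+18+15+3=37$. The 12 additional logics in (ii) give $37+12=49$.

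\textbf{Main obstacle.} The delicate point is confirming that these logics are pairwise distinct, so that the counts are exact. This I would take from \cite{GM2005}, where distinctness is established via the cluster bounds on the respective frame classes.
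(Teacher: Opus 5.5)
Your proposal is correct and follows the paper's route. Maksimova's classification \cite[Theorem~8.45]{GM2005} supplies the upper bounds of 37 and 49 and the previously known positive cases, \zcref{thm:mainthm} settles every remaining logic (all of the form $\Gamma(\logLambda,\nm,\nn)$ with $\logLambda\in\Set{\logInt,\logKC}$), and the \tCIP{} implies deductive interpolation via $\lphi \lProves \lpsi \iff \lProves \lBox\lphi\limplies\lpsi$, as you argue. One small slip in Step~2: \zcref{tab:open cases} lists all eight outstanding logics, with the six open ones in its first three rows and the two claimed-without-proof ones in its last row, but since \zcref{thm:mainthm} covers all eight, this does not affect the argument.
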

 
    \printbibliography

@BOOK{CZ1997,
  AUTHOR = {Chagrov, Alexander and Zakharyaschev, Michael},
  PUBLISHER = {Oxford University Press},
  DATE = {1997},
  GENDER = {pm},
  ISBN = {0 19 853779 4},
  SERIES = {Oxford Logic Guides},
  TITLE = {Modal Logic},
  VOLUME = {35},
}

@BOOK{BdRV2001,
  AUTHOR = {Blackburn, Patrick and de Rijke, Maarten and Venema, Yde},
  PUBLISHER = {Cambridge University Press},
  DATE = {2001},
  GENDER = {pm},
  ISBN = {978 0 521 52714 9},
  SERIES = {Cambridge Tracts in Theoretical Computer Science},
  TITLE = {Modal Logic},
  VOLUME = {53},
}

@BOOK{GM2005,
  AUTHOR = {Gabbay, Dov M. and Maksimova, Larisa Lvovna},
  PUBLISHER = {Oxford University Press},
  DATE = {2005},
  GENDER = {pp},
  ISBN = {0 19 851174 4},
  SERIES = {Oxford Logic Guides},
  TITLE = {Interpolation and Definability},
  VOLUME = {46},
}

@ARTICLE{Fine1974-ascending-chain-S4-logics,
  AUTHOR = {Fine, Kit},
  DATE = {1974},
  DOI = {10.1111/j.1755-2567.1974.tb00081.x},
  JOURNALTITLE = {Theoria},
  NUMBER = {2},
  PAGES = {110--116},
  TITLE = {An ascending chain of {S4} logics},
  VOLUME = {40},
}

@ARTICLE{Jankov1968-strongly-independent-superintuitionistic-calculi,
  AUTHOR = {Jankov, Vadim Anatolyevich},
  TRANSLATOR = {Yablonski, A.},
  ORIGLANGUAGE = {russian},
  PUBLISHER = {American Mathematical Society},
  DATE = {1968},
  JOURNALTITLE = {Soviet Mathematics},
  NUMBER = {4},
  PAGES = {806--807},
  TITLE = {Constructing a sequence of strongly independent superintuitionistic propositional calculi},
  VOLUME = {9},
}

@ARTICLE{Maksimova1978-Craigs-theorem,
  AUTHOR = {Maksimova, Larisa Lvovna},
  ORIGLANGUAGE = {russian},
  PUBLISHER = {Plenum Publishing Corporation},
  DATE = {1978},
  DOI = {10.1007/BF01670006},
  ISSN = {0002-5232},
  JOURNALTITLE = {Algebra and Logic},
  ORIGDATE = {1977-11},
  PAGES = {427--455},
  TITLE = {{Craig's} theorem in superintuitionistic logics and amalgamable varieties of pseudo-{Boolean} algebras},
  VOLUME = {16},
}

@INPROCEEDINGS{Smorynski1978-Beths-theorem-and-self-referential-sentences,
  AUTHOR = {Smoryński, Craig},
  EDITOR = {Macintyre, Angus and Pacholski, Leszek and Paris, Jeff},
  PUBLISHER = {North-Holland Publishing Company},
  BOOKTITLE = {Logic Colloquium '77},
  DATE = {1978},
  DOI = {10.1016/S0049-237X(08)72008-1},
  EVENTDATE = {1977-08-01/1977-08-12},
  ISBN = {0 444 85178 X},
  ISSN = {0049-237X},
  PAGES = {253--261},
  SERIES = {Studies in Logic and the Foundations of Mathematics},
  TITLE = {Beth's Theorem and Self-Referential Sentences},
  VENUE = {Wrołcaw, Poland},
  VOLUME = {96},
}

@ARTICLE{Maksimova1980-interpolation-theorems,
  AUTHOR = {Maksimova, Larisa Lvovna},
  ORIGLANGUAGE = {russian},
  PUBLISHER = {Plenum Publishing Corporation},
  DATE = {1980},
  DOI = {10.1007/BF01673502},
  ISSN = {0002-5232},
  JOURNALTITLE = {Algebra and Logic},
  ORIGDATE = {1979-09},
  PAGES = {348--370},
  TITLE = {Interpolation theorems in modal logics and amalgamable varieties of topological {Boolean} algebras},
  VOLUME = {18},
}

@ARTICLE{Boolos1980-systems-of-modal-logic-with-provability-interpretations,
  AUTHOR = {Boolos, George},
  DATE = {1980-04},
  DOI = {10.1111/j.1755-2567.1980.tb00686.x},
  JOURNALTITLE = {Theoria},
  NUMBER = {1},
  PAGES = {7--18},
  TITLE = {On Systems of Modal Logic with Provability Interpretations},
  VOLUME = {46},
}

@ARTICLE{Rautenberg1980-splitting-lattices-of-logics,
  AUTHOR = {Rautenberg, Wolfgang},
  DATE = {1980-09},
  DOI = {10.1007/BF02021134},
  JOURNALTITLE = {Archiv für mathematische Logik und Grundlagenforschung},
  PAGES = {155--159},
  TITLE = {Splitting lattices of logics},
  VOLUME = {20},
}

@ARTICLE{Maksimova1981-interpolation-theorems-sufficient-conditions,
  AUTHOR = {Maksimova, Larisa Lvovna},
  ORIGLANGUAGE = {russian},
  PUBLISHER = {Plenum Publishing Corporation},
  DATE = {1981},
  DOI = {10.1007/BF01669837},
  ISSN = {0002-5232},
  JOURNALTITLE = {Algebra and Logic},
  NUMBER = {2},
  ORIGDATE = {1980-03},
  PAGES = {120--132},
  TITLE = {Interpolation theorems in modal logics. Sufficient conditions},
  VOLUME = {19},
}

@ARTICLE{Maksimova1987-interpolation-in-normal-modal-logics,
  AUTHOR = {Maksimova, Larisa Lvovna},
  LANGUAGE = {russian},
  LOCATION = {Kishinev},
  PUBLISHER = {Izdatel’stvo Shtiintsa},
  DATE = {1987},
  ISSN = {0542-9994},
  JOURNALSUBTITLE = {Neklassicheskie logiki},
  JOURNALTITLE = {Matematicheskie Issledovaniya},
  PAGES = {40--56},
  TITLE = {Interpolation in normal modal logics},
  VOLUME = {98},
}

@ARTICLE{Zakharyaschev1992-canonical-formulas-for-K4-part-1,
  AUTHOR = {Zakharyaschev, Michael},
  PUBLISHER = {Association for Symbolic Logic},
  DATE = {1992-12},
  DOI = {10.2307/2275372},
  EPRINT = {2275372},
  EPRINTTYPE = {jstor},
  JOURNALTITLE = {The Journal of Symbolic Logic},
  NUMBER = {4},
  PAGES = {1377--1402},
  TITLE = {Canonical Formulas for {K4}. Part {I}: Basic Results},
  VOLUME = {57},
}

@THESIS{Bezhanishvili2006-lattices-of-intermediate-and-cylindric-modal-logics,
  AUTHOR = {Bezhanishvili, Nick},
  INSTITUTION = {Universiteit van Amsterdam},
  DATE = {2006-01-26},
  TITLE = {Lattices of Intermediate and Cylindric Modal Logics},
  TYPE = {phdthesis},
}
\end{document}